\newtheorem{theorem}{Theorem}
\newtheorem{lemma}{Lemma}
\newtheorem{corollary}{Corollary}
\newtheorem*{Go}{Theorem G}
\newtheorem*{Go2}{Theorem G2}
\newtheorem*{tep}{Theorem T}
\begin{document}
\author{Tsitsino tepnadze}
\title[Approximation Properties of Ces\`{a}ro Means]{On the Approximation
Properties of Ces\`{a}ro Means of Negative Order of double Vilenkin-Fourier
Series}
\address{T.Tepnadze, Department of Mathematics, Faculty of Exact and Natural
Sciences, Ivane Javakhishvili Tbilisi State University, Chavcha\-vadze str.
1, Tbilisi 0128, Georgia}
\email{tsitsinotefnadze@gmail.com}
\maketitle

\begin{abstract}
In this paper we establish approximation properties of Ces\`{a}ro $%
(C,-\alpha ,-\beta )$ means with $\alpha ,\beta $ $\epsilon $ $(0,1)$ of
Vilenkin-Fourier series.This result allows one to obtain a condition which
is sufficient for the convergence of the means $\sigma _{n,m}^{-\alpha
,-\beta }(x,y,f)$ to $f(x,y)$ in the $L^{p}-$metric.
\end{abstract}

\footnotetext{%
2010 Mathematics Subject Classification 42C10 .
\par
Key words and phrases: Vilenkin system, Ces\`{a}ro means, Convergence in norm%
}

Let $N_{+}$ denote the set of positive integers, $N:=N_{+}\cup \{0\}.$ Let $%
m:=\left( m_{0},m_{1},...\right) $ denote a sequence of positive integers
not lass then 2. Denote by $Z_{m_{k}}:=\{0,1,...,m_{k}-1\}$ the additive
group of integers modulo $m_{k}$. Define the group $G_{m}$ as the complete
direct product of the groups $Z_{m_{j}}$, with the product of the discrete
topologies of $Z_{mj}$'s.

The direct product of the measures%
\begin{equation*}
\mu _{k}\left( \{j\}\right) :=\frac{1}{m_{k}}\ \ \ \ \ \ \ \ \left( j\text{ }%
\in Z_{m_{k}}\right)
\end{equation*}%
is the Haar measure on $G_{m}$ with $\mu \left( G_{m}\right) =1.$ If the
sequence $m$ is bounded, then $G_{m}$\ is called a bounded Vilenkin group.
In this paper we will consider only bounded Vilenkin group. The elements of $%
G_{m}$ can be represented by sequences $x:=\left(
x_{0},x_{1},...,x_{j},...\right) ,$ $\left( x_{j}\in Z_{m_{j}}\right) .$ The
group operation $+$ in $G_{m}$ is given by%
\begin{equation*}
x+y=\left( \left( x_{0}+y_{0\text{ }}\right) mod\text{ }m_{0},...,\left(
x_{k}+y_{k\text{ }}\right) mod\text{ }m_{k},...\right) ,
\end{equation*}%
where $x:=\left( x_{0},...,x_{k},...\right) $ and $y:=\left(
y_{0},...,y_{k},...\right) \in G_{m}.$ The inverse of $+$ will be denoted by 
$-.$

It is easy to give a base for the neighborhoods of $G_{m}:$

\begin{equation*}
I_{0}\left( x\right) :=G_{m},
\end{equation*}

\begin{equation*}
I_{n}\left( x\right) :=\{y\in G_{m}|y_{0\text{ }}=x_{0\text{ }},...,y_{n-1%
\text{ }}=x_{n-1\text{ }}\}
\end{equation*}%
for $x$ $\in $ $G_{m},$ $n$ $\in $ $N.$ Define $I_{n}:=I_{n}\left( 0\right) $
for $n\in N_{+}$. Set $e_{n}:=\left( 0,...,0,1,0,...\right) \in $ $G_{m}$
the $n+1$ th coordinate of which is $1$ and the rest are zeros $\left( n\in
N\right) .$

If we define the so-called generalized number system based on $m$ in the
following way: $M_{0}:=1,$ $M_{k+1}:=m_{k}M_{k}$ $\left( k\in N\right) ,$
then every $n$ $\in $ $N$ can be uniquely expressed as $n=\sum%
\limits_{j=0}^{\infty }n_{j}M_{j},$ where $n_{j}$ $\in \ Z_{m_{j}}$ $\left(
j\in N_{+}\right) $ and only a finite number of $n_{j}$'s differ from zero.
We use the following notation. Let $\left\vert n\right\vert :=$max$\{k\in
N:n_{k}\neq 0\}$ (that is , $M_{|n|}\leq n<M_{|n|+1}$).

Next, we introduce of $G_{m}$ an orthonormal system which is called Vilenkin
system. At first define the complex valued functions $r_{k}\left( x\right)
:G_{m}\rightarrow C.$ the generalized Rademacher functions in this way 
\begin{equation*}
r_{k}(x):=\exp \frac{2\pi ix_{k}}{m_{k}}\text{\ \ \ }\left( i^{2}=-1,\text{ }%
x\in G_{m},\text{ }k\text{ }\in \text{ }N\right) .
\end{equation*}

Now define the Vilenkin system $\psi :=\left( \psi _{n}:n\in N\right) $ on $%
G_{m}$ as follows.

\begin{equation*}
\psi _{n}\left( x\right) :=\prod\limits_{k=0}^{\infty }r_{k}^{n_{k}}\left(
x\right) ,\text{\ \ \ \ \ \ \ }\left( n\text{ }\epsilon \text{ }N\right) .
\end{equation*}

In particular, we call the system the Walsh-Paley if $m=2.$

The Dirichlet kernels is defined by

\begin{equation*}
D_{n}:=\sum\limits_{k=0}^{n-1}\psi _{k},\ \ \ \ \ \left( n\text{ }\in
N_{+}\right) .
\end{equation*}

Recall that (see \cite{Gol} or \cite{Sw})

\begin{equation}
\quad \hspace*{0in}D_{M_{n}}\left( x\right) =\left\{ 
\begin{array}{l}
\text{ }M_{n},\text{\thinspace \thinspace \thinspace \thinspace if\thinspace
\thinspace }x\in I_{n}, \\ 
\text{ }0,\text{\thinspace \thinspace \thinspace \thinspace \thinspace if
\thinspace \thinspace }x\notin I_{n}.%
\end{array}%
\right.  \label{for1}
\end{equation}

The Vilenkin system is orthonormal and complete in \ $L^{1}\left(
G_{m}\right) $\cite{AVDR}.

Next, we introduce some notation with respect to the theory of
two-demonsional Vilenkin system. Let $\tilde{m}$ be a sequence like $m$. The
relation between the sequences $\left( \tilde{m}_{n}\right) $ and \ $\left( 
\tilde{M}_{n}\right) $ is the same as between sequences $\left( m_{n}\right) 
$ and $\left( M_{n}\right) .$ The group $G_{m}\times G_{\tilde{m}}$ is
called a two-dimensional Vilenkin group. The normalized Haar measure is
denoted by $\mu $ as in the one-dimensional case. We also suppose that $m=%
\tilde{m}$ and $G_{m}\times G_{\tilde{m}}=G_{m}^{2}.$

The norm of the space $L^{p}\left( G_{m}^{2}\right) $ is defined by

\begin{equation*}
\left\Vert f\right\Vert _{p}:=\left( \int\limits_{ G_{m}^{2}}\left\vert f\left( x,y\right) \right\vert ^{p}d\mu \left( x,y\right)
\right) ^{1/p},\ \ \ \left( 1\leq p<\infty \right) .
\end{equation*}

Denote by $C\left( G_{m}^{2}\right) $ the class of continuous functions on
the group $G_{m}^{2}$, endoved with the supremum norm.

For the sake of brevity in notation, we agree to write $L^{\infty }\left(
G_{m}^{2}\right) $ instead of $C\left( G_{m}^{2}\right) .$

The two-dimensional Fourier coefficients,the rectangular partial sums of the
Fourier series,the Dirichlet kernels with respect to the two-dimensional
Vilenkin system are defined as follow:

\begin{equation*}
\widehat{f}\left( n_{1},n_{2}\right) :=\int\limits_{G_{m}^{2}}f\left(
x,y\right) \bar{\psi}_{n_{1}}\left( x\right) \bar{\psi}_{n_{2}}\left(
y\right) d\mu \left( x,y\right) ,
\end{equation*}

\begin{equation*}
S_{n_{1},n_{2}}\left( x,y,f\right)
:=\sum\limits_{k_{1}=0}^{n_{1}-1}\sum\limits_{k_{2}=0}^{n_{2}-1}\widehat{f}%
\left( k_{1},k_{2}\right) \psi _{k_{1}}\left( x\right) \psi _{k_{2}}\left(
y\right) ,\ 
\end{equation*}

\begin{equation*}
D_{n_{1},n_{2}}\left( x,y\right) :=D_{n_{1}}\left( x\right) D_{n_{2}}\left(
y\right) ,
\end{equation*}%
\ \ 

Denote

\begin{equation*}
S_{n}^{\left( 1\right) }\left( x,y,f\right) :=\sum\limits_{l=0}^{n-1}%
\widehat{f}\left( l,y\right) \psi _{l}\left( x\right) ,\ 
\end{equation*}

\begin{equation*}
S_{m}^{\left( 2\right) }\left( x,y,f\right) :=\sum\limits_{r=0}^{m-1}%
\widehat{f}\left( x,r\right) \psi _{r}\left( y\right) ,\ 
\end{equation*}

where

\begin{equation*}
\widehat{f}\left( l,y\right) =\int\limits_{G_{m}}f\left( x,y\right) \psi
_{l}\left( x\right) d\mu \left( x\right)
\end{equation*}

and

\begin{equation*}
\widehat{f}\left( x,r\right) =\int\limits_{G_{m}}f\left( x,y\right) \psi
_{r}\left( y\right) d\mu \left( y\right) .
\end{equation*}

The $(c,-\alpha ,-\beta )$ means of the two-dimensional Vilenkin-Fourier
series are defined as

\begin{equation*}
\sigma _{n,m}^{-\alpha ,-\beta }\left( x,y,f\right) =\frac{1}{A_{n}^{-\alpha
}A_{m}^{-\beta }}\sum\limits_{i=0}^{n}\sum\limits_{j=0}^{m}A_{n-i}^{-\alpha
}A_{m-j}^{-\beta }\hat{f}\left( i,j\right) \text{ }\psi _{i}\left( u\right)
\psi _{j}\left( v\right) ,
\end{equation*}

where

\begin{equation*}
A_{0}^{\alpha }=1,\ \ \ \ \ \ \ A_{n}^{\alpha }=\frac{\left( \alpha
+1\right) ...\left( \alpha +n\right) }{n!}.
\end{equation*}

It is well Known that \cite{Zy}

\begin{equation}
A_{n}^{\alpha }=\sum\limits_{k=0}^{n}A_{k}^{\alpha -1}.\text{ \ \ \ \ \ \ }
\label{for2}
\end{equation}

\begin{equation}
A_{n}^{\alpha }-A_{n-1}^{\alpha }=A_{n}^{\alpha -1}.\ \ \ \ \ \ \ 
\label{for3}
\end{equation}

\begin{equation}
A_{n}^{\alpha }\sim n^{\alpha }.\ \ \ \ \ \ \   \label{for4}
\end{equation}

The dyadic partial moduli of continuity of a function $f\in L^{p}\left(
G_{m}^{2}\right) \ $\ in the $L^{p}$-norm are defined by

\begin{equation*}
\omega _{1}\left( f,\frac{1}{M_{n}}\right) _{p}=\sup_{u\in I_{n}}\left\Vert
f\left( \cdot -u,\cdot \right) -f\left( \cdot ,\cdot \right) \right\Vert
_{p},
\end{equation*}

\begin{equation*}
\omega _{2}\left( f,\frac{1}{M_{n}}\right) _{p}=\sup_{v\in I_{n}}\left\Vert
f\left( \cdot ,\cdot -v\right) -f\left( \cdot ,\cdot \right) \right\Vert
_{p},
\end{equation*}

while the dyadic mixed modulus of continuity is defined as follows:

\begin{equation*}
\omega _{1,2}\left( f,\frac{1}{M_{n}},\frac{1}{M_{m}}\right) _{p}
\end{equation*}

\begin{equation*}
=\sup_{\left( u,v\right) \in I_{n}\times I_{m}}\left\Vert f\left( \cdot
-u,\cdot -v\right) -f\left( \cdot -u,\cdot \right) -f\left( \cdot ,\cdot
-v\right) +f\left( \cdot ,\cdot \right) \right\Vert _{p},
\end{equation*}

it is clear that 
\begin{equation*}
\omega _{1,2}\left( f,\frac{1}{M_{n}},\frac{1}{M_{m}}\right) _{p}\leq \omega
_{1}\left( f,\frac{1}{M_{n}}\right) _{p}+\omega _{2}\left( f,\frac{1}{M_{m}}%
\right) _{p}.
\end{equation*}

The dyadic total modulus of continuity is defined by

\begin{equation*}
\omega \left( f,\frac{1}{M_{n}}\right) _{p}=\sup_{\left( u,v\right) \in
I_{n}\times I_{n}}\left\Vert f\left( \cdot -u,\cdot -v\right) -f\left( \cdot
,\cdot \right) \right\Vert _{p}
\end{equation*}

The problems of summability of partial sums and Ces\`{a}ro \ means for
Walsh-Fourier series were studied in \cite{Fi}, \cite{GoAMH}-\cite{Su},\cite%
{Tev}. In his monography \cite{Zh} Zhizhinashvili investigated the behavior
of Ces\`{a}ro method of negative order for trigonometric Fourier series in
detail. Goginava \cite{GoJAT} studied analogical question in case of the
Walsh system. In particular, the following theorem is proved.

\begin{Go}
\cite{GoJAT}Let $f$ belong to $L^{p}\left( G_{2}\right) $ for some $p$ $\in $
$\left[ 1,\infty \right] $ and \ $\alpha $ $\in $ $\left( 0,1\right) $. Then
for any \ $2^{k}\leq n<2^{k+1}$ $(k,n\in N)$\ the inequality
\end{Go}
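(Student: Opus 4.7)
The plan is to reduce the problem to estimating partial sums $S_j(f)$ at dyadic indices $M_k$, since for Vilenkin-Fourier series one has the fundamental identity $S_{M_k}(x,f) = M_k \int_{I_k(x)} f$, so that $\|f - S_{M_k}(f)\|_p \leq \omega(f, 1/M_k)_p$. The first step is to write
\begin{equation*}
\sigma_n^{-\alpha}(f) - f = \frac{1}{A_n^{-\alpha}} \sum_{i=0}^{n} A_{n-i}^{-\alpha}\widehat{f}(i)\psi_i - f,
\end{equation*}
and then use the identity (\ref{for2}) together with Abel's summation by parts to convert the weighted sum of Fourier coefficients into a weighted sum of partial sums $S_i(f)$. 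More precisely, applying (\ref{for3}) iteratively, one rewrites
\begin{equation*}
\sum_{i=0}^{n} A_{n-i}^{-\alpha} \widehat{f}(i)\psi_i = \sum_{i=0}^{n-1} (A_{n-i}^{-\alpha} - A_{n-i-1}^{-\alpha})\, S_{i+1}(f) + \text{boundary terms},
\end{equation*}
which by (\ref{for3}) makes the weights $A_{n-i-1}^{-\alpha-1}$ explicit.

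The second step is to split the index range according to the dyadic (generalized Vilenkin) blocks $M_j \leq i < M_{j+1}$ for $j = 0, 1, \dots, k$, where $M_k \leq n < M_{k+1}$. On each block, the partial sum $S_i(f)$ is close to $S_{M_j}(f)$ in a controlled way, and the latter can be compared to $f$ through the partial modulus of continuity via the averaging identity above. The weights $|A_{n-i-1}^{-\alpha-1}|$ are estimated using (\ref{for4}), giving $|A_{n-i}^{-\alpha-1}| \sim (n-i+1)^{-\alpha-1}$, while the normalization $1/A_n^{-\alpha} \sim n^{\alpha}$ also comes from (\ref{for4}).

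The third step assembles the estimate: the blocks with $j$ far from $k$ contribute sums of the type $n^{\alpha - 1} M_j \cdot \omega(f, 1/M_j)_p$ after summation on $i$ within the block, while the block closest to $n$ requires a separate, more delicate handling producing the leading $\omega(f, 1/M_k)_p$ term. The resulting inequality should take the form
\begin{equation*}
\|\sigma_n^{-\alpha}(f) - f\|_p \leq C\left(\frac{1}{n^{1-\alpha}} \sum_{j=0}^{k} M_j^{1-\alpha}\, \omega\!\left(f, \tfrac{1}{M_j}\right)_p + \omega\!\left(f, \tfrac{1}{M_k}\right)_p\right).
\end{equation*}

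The main obstacle I anticipate is the block where $i$ is within $O(1)$ of $n$: here $A_{n-i}^{-\alpha-1}$ is of order $1$ rather than small, and the cancellation encoded in the alternating signs of the coefficients $A_j^{-\alpha}$ (a well-known feature of negative-order Cesàro kernels) must be exploited carefully. A related technical point is controlling $\|S_i(f) - S_{M_j}(f)\|_p$ uniformly in $i \in [M_j, M_{j+1})$, which for the Vilenkin setting requires the boundedness of a maximal-type operator on $L^p$ or an explicit kernel decomposition $D_i = D_{M_j} + \psi_{M_j} D_{i - M_j}$ followed by summation of the resulting pieces.
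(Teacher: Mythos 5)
Your skeleton (Abel transformation via (\ref{for2})--(\ref{for3}) to pass from the coefficients $A_{n-i}^{-\alpha}$ to $A_{n-i}^{-\alpha-1}D_i$, decomposition into dyadic blocks $M_r\le i<M_{r+1}$, comparison of $f$ with $S_{M_r}(f)$, and the normalization $1/|A_n^{-\alpha}|\sim n^{\alpha}$ from (\ref{for4})) is indeed the standard route, and it is what the paper itself carries out in the two--dimensional setting in Lemmas \ref{L2}--\ref{L4}. But the inequality you propose to arrive at is not the one asserted, and it cannot be correct. Your leading term is $\omega(f,1/M_k)_p$ with no factor $M_k^{\alpha}$, whereas Theorem G has $2^{k\alpha}\omega(1/2^{k-1},f)_p$. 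A bound with leading term $\omega(f,1/M_k)_p$ (and a tail that is summable with bounded weights) would force $\Vert\sigma_n^{-\alpha}(f)-f\Vert_{\infty}\to 0$ for \emph{every} continuous $f$, contradicting the known divergence of negative--order Ces\`{a}ro means (this is exactly the phenomenon behind Theorem \ref{T4} and behind the necessity of $\omega=o(2^{-k\alpha})$ for convergence). The factor $n^{\alpha}$ in the top block is unavoidable: for $M_{k-1}\le i\le n$ the kernel piece satisfies $\int_{G_m}\bigl\vert\sum_{i=M_{k-1}}^{n}A_{n-i}^{-\alpha}\psi_i(u)\bigr\vert\,d\mu(u)\le c(\alpha)$ (estimate (\ref{for22.1})), which is the \emph{best} one can say, and after dividing by $\vert A_n^{-\alpha}\vert\sim n^{-\alpha}$ this block contributes $c\,n^{\alpha}\,\omega(f,1/M_{k-1})_p$; the ``alternating--sign cancellation'' you hope to exploit there is already fully spent in obtaining (\ref{for22.1}).

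The tail is also not obtained the way you describe. Bounding $\vert A_{n-i+1}^{-\alpha-1}\vert\sim(n-i+1)^{-\alpha-1}$ pointwise and pairing it with any crude bound on $\Vert D_i\Vert_1$ gives at best weights of the shape $(M_r/n)^{1-\alpha}$ (or worse, with logarithms), not the asserted $2^{r-k}=M_r/M_k$. The missing ingredient is Lemma \ref{L1}, the estimate $\frac1n\int_{G_m}\vert\sum_k\alpha_kD_k\vert\,d\mu\le c\,n^{-1/2}(\sum_k\alpha_k^2)^{1/2}$, applied blockwise: it gives $\int\vert\sum_{i=M_r}^{M_{r+1}-1}A_{n-i+1}^{-\alpha-1}D_i\vert\,d\mu\le \sqrt{M_{r+1}}\bigl(\sum_i(n-i+1)^{-2\alpha-2}\bigr)^{1/2}\le M_{r+1}(n-M_{r+1})^{-\alpha-1}$, and multiplying by $n^{\alpha}$ yields $M_{r+1}/n\sim M_r/M_k$. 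Also note that the middle term of your Abel decomposition (the one with $S_{M_r}(\cdot-u,f)-S_{M_r}(\cdot,f)$) vanishes by orthogonality (cf.\ (\ref{for9})); without observing this you would be left with a term you cannot absorb. So the proposal identifies the correct architecture but misses the two quantitatively decisive tools ((\ref{for22.1}) and Lemma \ref{L1}) and, as a result, states a target inequality that is false.
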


\begin{equation*}
\left\Vert \sigma _{n}^{-\alpha }\left( f\right) -f\right\Vert _{p}\leq
c\left( p,\alpha \right) \left\{ 2^{k\alpha }\omega \left(
1/2^{k-1},f\right) _{p}+\sum\limits_{r=0}^{k-2}2^{r-k}\omega \left(
1/2^{r},f\right) _{p}\right\}
\end{equation*}

holds true.

The present author in \cite{Te} investigated analogous question in the case
of Vilenkin system.

\begin{tep}
\label{T2}Let $f$ belong to $L^{p}\left( G_{m}\right) $ for some $p$ $\in $ $%
\left[ 1,\infty \right] $ and \ $\alpha $ $\in $ $\left( 0,1\right) $. Then
for any \ $M_{k}\leq n<M_{k+1}$ $(k,n\in N)$\ the inequality

\begin{equation*}
\left\Vert \sigma _{n}^{-\alpha }\left( f\right) -f\right\Vert _{p}\leq
c\left( p,\alpha \right) \left\{ M_{k}^{\alpha }\omega \left(
1/M_{k-1},f\right) _{p}+\sum\limits_{r=0}^{k-2}\frac{M_{r}}{M_{k}}\omega
\left( 1/M_{r},f\right) _{p}\right\}
\end{equation*}

holds true.
\end{tep}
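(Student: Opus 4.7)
The plan is to realize $\sigma_n^{-\alpha}(f)$ as a convolution. Setting
\begin{equation*}
K_n^{-\alpha}(t) := \frac{1}{A_n^{-\alpha}} \sum_{i=0}^{n} A_{n-i}^{-\alpha}\, \psi_i(t), \qquad \widehat{K_n^{-\alpha}}(0) = 1,
\end{equation*}
one has
\begin{equation*}
\sigma_n^{-\alpha}(f)(x) - f(x) = \int_{G_m} [f(x-t) - f(x)]\, K_n^{-\alpha}(t)\, d\mu(t),
\end{equation*}
so that the generalized Minkowski inequality reduces the theorem to weighting the $L^p$-modulus of continuity against $|K_n^{-\alpha}|$.

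I would next split $G_m = I_{k-1} \cup \bigsqcup_{r=0}^{k-2}(I_r \setminus I_{r+1})$: on $I_{k-1}$ the translation norm $\|f(\cdot - t) - f\|_p$ is at most $\omega(1/M_{k-1}, f)_p$, and on each ring at most $\omega(1/M_r, f)_p$. Matching the coefficients in the statement, it suffices to prove the two kernel estimates
\begin{equation*}
\int_{I_{k-1}} |K_n^{-\alpha}(t)|\, d\mu(t) \leq c(\alpha)\, M_k^{\alpha}, \qquad \int_{I_r \setminus I_{r+1}} |K_n^{-\alpha}(t)|\, d\mu(t) \leq c(\alpha)\, \frac{M_r}{M_k}.
\end{equation*}
The first is essentially the Lebesgue-constant estimate for the $(C,-\alpha)$ kernel and can be obtained from (4) together with a direct computation on $I_{k-1}$.

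The heart of the proof is the second bound. I would attack it via Abel summation: applying (3) with parameter $-\alpha$ to write $A_{n-i}^{-\alpha} - A_{n-i-1}^{-\alpha} = A_{n-i}^{-\alpha-1}$, and summing by parts against the partial sums $\sum_{j\leq i}\psi_j = D_{i+1}$, one obtains
\begin{equation*}
A_n^{-\alpha}\, K_n^{-\alpha}(t) = D_{n+1}(t) + \sum_{i=0}^{n-1} A_{n-i}^{-\alpha-1}\, D_{i+1}(t),
\end{equation*}
which transfers the problem to a weighted sum of Vilenkin Dirichlet kernels. On $I_r \setminus I_{r+1}$, identity (1) forces $D_{M_s}(t) = 0$ for $s > r$; combined with the standard representation of $D_j$ through the block kernels $D_{M_s}$ and the Rademacher-type factors $r_s$, one extracts pointwise control of $D_j$ on the ring, which together with the decay $|A_{n-i}^{-\alpha-1}| \sim (n-i)^{-\alpha-1}$ from (4) and the measure $\sim 1/M_r$ of $I_r \setminus I_{r+1}$ produces the needed $M_r/M_k$.

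The main obstacle is extracting the gain $M_r/M_k$ rather than a weaker power: a crude triangle inequality on the Abel identity falls short, because the factors $A_{n-i}^{-\alpha-1}$ blow up as $i \to n$. Overcoming this calls for a second Abel transform grouping the indices $i$ according to the blocks $[M_s, M_{s+1})$, together with the cancellation between successive Dirichlet kernels and the monotonic decay of the weights across scales, so that the contributions of the blocks with $s \geq r+1$ telescope and the remaining blocks benefit from the vanishing of $D_{M_s}$ on $I_r \setminus I_{r+1}$. Once both kernel estimates are in hand, substitution into the Minkowski bound completes the proof.
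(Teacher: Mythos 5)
Your reduction of the theorem to the two kernel estimates is where the argument breaks down: the ring estimate $\int_{I_r\setminus I_{r+1}}\left\vert K_n^{-\alpha}(t)\right\vert d\mu(t)\leq c(\alpha)M_r/M_k$ is false, and no refinement of the Abel summation will rescue it. Indeed, since $A_j^{-\alpha}>0$ for $0<\alpha<1$, the trivial bound $\left\vert K_n^{-\alpha}(t)\right\vert\leq (A_n^{-\alpha})^{-1}\sum_{j=0}^{n}A_j^{-\alpha}=A_n^{1-\alpha}/A_n^{-\alpha}\leq cn$ together with $\mu(I_{k-1})=1/M_{k-1}\leq\lambda^{2}/n$ (where $\lambda=\sup_jm_j<\infty$) already gives $\int_{I_{k-1}}\left\vert K_n^{-\alpha}\right\vert d\mu\leq c(\lambda)$, a bound independent of $n$. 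If your ring estimate also held, then summing over $r$ would yield $\Vert K_n^{-\alpha}\Vert_{L^{1}(G_m)}\leq c(\lambda)+c(\alpha)\sum_{r=0}^{k-2}M_r/M_k\leq c'$ uniformly in $n$, and your Minkowski step would then give uniform convergence of $\sigma_n^{-\alpha}(f)$ for every continuous $f$. This contradicts the known growth $\Vert K_n^{-\alpha}\Vert_{1}\sim n^{\alpha}$ of the $(C,-\alpha)$ Lebesgue constants and the sharpness results (the divergence example in \cite{Te}, of which Theorem \ref{T4} above is the two-dimensional analogue). In fact the $L^{1}$ mass of $K_n^{-\alpha}$ on $I_r\setminus I_{r+1}$ is of order $(M_k/M_r)^{\alpha}$ --- compare the trigonometric kernel, which behaves like $n^{\alpha}\vert t\vert^{\alpha-1}$ --- so the crude bound you dismiss as "falling short" is essentially the truth, and it is much larger than $M_r/M_k$.

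The moral is that the factor $M_r/M_k$ cannot be produced by any estimate of $\vert K_n^{-\alpha}\vert$ alone; it comes from cancellation against $f$. The argument actually used (see the proof of Lemma \ref{L2}, which is the two-dimensional version of the proof in \cite{Te}) splits in frequency at $M_{k-1}$ rather than in space. The high-frequency part $M_{k-1}\leq i\leq n$ is handled by subtracting $S_{M_{k-1}}(f)$, whose Fourier coefficients kill those frequencies, and by the estimate $\int_{G_m}\vert\sum_{v=M_{k-1}}^{M_k-1}A_{n-v}^{-\alpha}\psi_v(u)\vert d\mu(u)\leq c(\alpha)$; this produces the term $M_k^{\alpha}\omega(1/M_{k-1},f)_p$. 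For the Abel-transformed low-frequency part, on each block $M_r\leq i<M_{r+1}$ one subtracts $S_{M_r}(f)$ --- whose contribution vanishes by orthogonality, the step $I_{12}=0$ --- and applies Lemma \ref{L1}, the global bound $\int_{G_m}\vert\sum_i\alpha_iD_i\vert d\mu\leq c\sqrt{n}\left(\sum_i\alpha_i^{2}\right)^{1/2}$. It is this $\ell^{2}$-versus-$\ell^{1}$ gain, giving $\sqrt{M_{r+1}}\cdot\sqrt{M_{r+1}}\,(n-M_{r+1})^{-\alpha-1}$ per block, that yields $M_r/M_k$ after multiplying by $n^{\alpha}$; and it is intrinsically an $L^{1}$ estimate over all of $G_m$, which cannot be localized to the ring $I_r\setminus I_{r+1}$. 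Your spatial decomposition, by taking absolute values inside the $t$-integral before exploiting any structure of $f$, discards exactly the cancellation the proof depends on.
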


Gognava in \cite{GogAn} studied approximation properties of Ces\`{a}ro $%
(c,-\alpha ,-\beta )$ means with $\alpha ,\beta $ $\in $ $\left( 0,1\right) $
question in the case of double Walsh-Furier series.The following theorem was
proved.

\begin{Go2}
Let $f$ belong to $L^{p}\left(  G_{2}^{2}\right) $ for some $p$ $\in $
$\left[ 1,\infty \right] $ and \ $\alpha ,\beta $ $\in $ $\left( 0,1\right) $%
. Then for any \ $2^{k}\leq n<2^{k+1},2^{l}\leq m<2^{l+1}$ $(k,n\in N)$\ the
inequality

\begin{equation*}
\left\Vert \sigma _{n,m}^{-\alpha ,-\beta }\left( f\right) -f\right\Vert
_{p}\leq c\left( \alpha ,\beta \right) \left( 2^{k\alpha }\omega _{1}\left(
f,1/2^{k-1}\right) _{p}+2^{l\beta }\omega _{2}\left( f,1/2^{l-1}\right)
_{p}+\right.
\end{equation*}

\begin{equation*}
+2^{k\alpha }2^{l\beta }\omega _{1,2}\left( f,1/2^{k-1},1/2^{l-1}\right)
_{p}+
\end{equation*}

\begin{equation*}
\left. +\sum\limits_{r=0}^{k-2}2^{r-k}\omega _{1}\left( f,1/2^{r}\right)
_{p}+\sum\limits_{s=0}^{l-2}2^{s-l}\omega _{2}\left( f,1/2^{s}\right)
_{p}\right)
\end{equation*}

holds true.
\end{Go2}

In this paper, we estabilish analogous question in the case of double
Vilenkin-Fouries series.

\begin{theorem}
\label{T3}Let $f$ belong to $L^{p}\left( G_{m}^{2}\right) $ for some $p$ $%
\in $ $\left[ 1,\infty \right] $ and \ $\alpha $ $\in $ $\left( 0,1\right) $%
. Then for any \ $M_{k}\leq n<M_{k+1}$ $M_{l}\leq m<M_{l+1}(k,n,m,l\in N)$\
the inequality
\end{theorem}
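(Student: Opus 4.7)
The strategy is to adapt the one-dimensional argument of Theorem T by using the mixed-difference splitting that Goginava employed in the Walsh double case (Theorem G2). Denote by $K_n^{-\alpha}(u)$ the one-dimensional Vilenkin $(C,-\alpha)$ kernel. Since the two-dimensional Cesàro kernel factors as $K_n^{-\alpha}(u)\,K_m^{-\beta}(v)$ and $\int K_n^{-\alpha}\,d\mu=1$, the error can be written as
\begin{equation*}
\sigma_{n,m}^{-\alpha,-\beta}(x,y,f) - f(x,y) = \int_{G_m^2} \bigl[f(x-u,y-v) - f(x,y)\bigr]\, K_n^{-\alpha}(u)\, K_m^{-\beta}(v)\, d\mu(u,v).
\end{equation*}
I would then apply the three-term identity
\begin{equation*}
f(x-u,y-v) - f(x,y) = \Delta_{u,v}f(x,y) + \bigl[f(x-u,y) - f(x,y)\bigr] + \bigl[f(x,y-v) - f(x,y)\bigr],
\end{equation*}
where $\Delta_{u,v}f(x,y) := f(x-u,y-v) - f(x-u,y) - f(x,y-v) + f(x,y)$, to separate the two partial contributions from the genuinely mixed one.

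The two partial-difference terms reduce to one-dimensional problems. For the $u$-term, integrating out $v$ using $\int K_m^{-\beta}\,d\mu=1$ leaves $\sigma_n^{-\alpha}\bigl(f(\cdot,y)\bigr)(x) - f(x,y)$; applying Minkowski's inequality in $y$ and then Theorem T to each slice $f(\cdot,y)$ yields, after integrating over $y$, the contribution $c\bigl(M_k^\alpha\,\omega_1(f,1/M_{k-1})_p + \sum_{r=0}^{k-2}(M_r/M_k)\,\omega_1(f,1/M_r)_p\bigr)$. By symmetry, the $v$-term produces the corresponding $\omega_2$-contributions.

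The heart of the argument is the mixed term $\int \Delta_{u,v}f\, K_n^{-\alpha}(u)\, K_m^{-\beta}(v)\, d\mu(u,v)$. I would split $G_m\times G_m$ into the central square $I_k\times I_l$, the two strips $I_k\times(G_m\setminus I_l)$ and $(G_m\setminus I_k)\times I_l$, and the outer region $(G_m\setminus I_k)\times(G_m\setminus I_l)$, the latter three being dyadically decomposed into shells $I_r\setminus I_{r+1}$. On the central square, the $L^1$-bound $\int_{I_k}|K_n^{-\alpha}(u)|\,d\mu(u)\leq c\,M_k^{\alpha}$ extracted from the proof of Theorem T, applied in each variable, combines with the definition of $\omega_{1,2}$ to produce the term $M_k^{\alpha} M_l^{\beta}\,\omega_{1,2}(f,1/M_{k-1},1/M_{l-1})_p$. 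On the strips and on the outer region, one invokes the shell estimates $\int_{I_r\setminus I_{r+1}}|K_n^{-\alpha}|\,d\mu\leq c\,M_r/M_n$ from the same source, applies Fubini and Minkowski, and uses the pointwise bound $\|\Delta_{u,v}f\|_p\leq \omega_1(f,1/M_r)_p+\omega_2(f,1/M_s)_p$ from the inequality stated just after the definition of $\omega_{1,2}$, to fold each piece into the partial-modulus sums already produced. The main obstacle I expect is the bookkeeping in this last step: one must verify that the products of shell contributions on the outer region collapse, thanks to $\omega_{1,2}\leq\omega_1+\omega_2$, into the very same one-dimensional sums delivered by the partial terms, so that no spurious double sum survives in the final bound.
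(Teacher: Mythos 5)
Your high-level architecture is reasonable and genuinely different from the paper's: you split the difference $f(x-u,y-v)-f(x,y)$ into a mixed difference plus two partial differences and then decompose \emph{physical space} into $I_k\times I_l$, strips and dyadic shells, whereas the paper splits the \emph{frequency} sum into the four blocks $\{i<M_{k-1}\ \text{or}\ i\ge M_{k-1}\}\times\{j<M_{l-1}\ \text{or}\ j\ge M_{l-1}\}$ and estimates each block separately (Lemmas \ref{L2}--\ref{L4}). The engine of the paper's argument is not a kernel estimate at all: after an Abel transformation the low-frequency blocks become sums of Dirichlet kernels, the Sidon-type inequality of Lemma \ref{L1} controls their $L^{1}$ norms blockwise (producing the factors $M_{r}/M_{k}$), and --- crucially --- the partial sums $S_{M_r,M_s}f$ are subtracted inside each block so that the term $I_{12}$ vanishes by orthogonality; the high-frequency blocks are handled by the inequality (\ref{for22.1}) imported from \cite{Te}.

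The genuine gap in your proposal is the pair of localization estimates you attribute to ``the proof of Theorem T'': $\int_{I_k}|K_n^{-\alpha}|\,d\mu\le c\,M_k^{\alpha}$ and $\int_{I_r\setminus I_{r+1}}|K_n^{-\alpha}|\,d\mu\le c\,M_r/M_k$. Neither appears in, nor follows from, that proof; the one-dimensional argument in \cite{Te} is the same frequency-side Abel/Sidon/orthogonality argument as here and never establishes spatial decay of $|K_n^{-\alpha}|$. These shell bounds are exactly strong enough to make Theorem T a three-line consequence of the generalized Minkowski inequality, which is precisely why one should be suspicious of them: the $L^{1}$ norm of the negative-order kernel grows like $n^{\alpha}$, the Abel transform exhibits a component $D_{n+1}/A_n^{-\alpha}\sim n^{\alpha}D_{n+1}$ whose mass on each shell $I_r\setminus I_{r+1}$ is of order $n^{\alpha}$ rather than $M_r/M_k$, and any improvement would have to come from cancellation you have not demonstrated. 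Your plain Minkowski bound $\|\int\Delta_{u,v}f\,K_n^{-\alpha}K_m^{-\beta}\|_p\le\int\|\Delta_{u,v}f\|_p|K_n^{-\alpha}||K_m^{-\beta}|$ also forfeits the orthogonality trick ($I_{12}=0$) that the paper relies on, and there is no obvious substitute for it in the spatial picture. (A smaller issue: for $p<\infty$, ``apply Theorem T to each slice $f(\cdot,y)$ and integrate in $y$'' does not directly yield $\omega_1(f,\cdot)_p$ because the supremum over $u$ in the definition of the modulus sits inside the $y$-integral; one must rerun the one-dimensional proof in the mixed norm. Your bookkeeping for the strips and the outer region, using $\omega_{1,2}\le 2\omega_2$ on one strip, $\le 2\omega_1$ on the other, and $\le\omega_1+\omega_2$ outside, would indeed collapse correctly --- but only once the unproved kernel estimates are supplied.)
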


\begin{equation*}
\left\Vert \sigma _{n,m}^{-\alpha ,-\beta }\left( f\right) -f\right\Vert
_{p}\leq c\left( \alpha ,\beta \right) \left( \omega _{1}\left(
f,1/M_{k-1}\right) _{p}M_{k}^{\alpha }+\omega _{2}\left( f,1/M_{l-1}\right)
_{p}M_{l}^{\beta }+\right.
\end{equation*}

\begin{equation*}
+\omega _{1,2}\left( f,1/M_{k-1},1/M_{l-1}\right) _{p}M_{k}^{\alpha
}M_{l}^{\beta }+
\end{equation*}

\begin{equation*}
\left. +\sum\limits_{r=0}^{k-2}\frac{M_{r}}{M_{k}}\omega _{1}\left(
f,1/M_{r}\right) _{p}+\sum\limits_{s=0}^{l-2}\frac{M_{s}}{M_{l}}\omega
_{2}\left( f,1/M_{s}\right) _{p}\right)
\end{equation*}

holds true.

\begin{corollary}
\label{C1} Let $f\ $belong to $L^{p}$ for some $p\in \left[ 1,\infty \right]
.$ If

\begin{equation*}
M_{k}^{\alpha }\omega _{1}\left( f,\frac{1}{M_{k}}\right) _{p}\rightarrow 0\
\ as\ k\rightarrow \infty \left( 0<\alpha <1\right) ,
\end{equation*}

\begin{equation*}
M_{l}^{\beta }\omega _{1}\left( f,\frac{1}{M_{l}}\right) _{p}\rightarrow 0%
\text{\ \ as \ }l\rightarrow \infty \ \left( 0<\beta <1\right) ,
\end{equation*}
\end{corollary}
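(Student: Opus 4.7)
\emph{Proof plan.} The strategy is to reduce the two-dimensional problem to the one-variable case (Theorem T) by operator decomposition, isolating a genuinely two-dimensional ``mixed'' contribution. Let $T_1$ denote the partial Ces\`{a}ro $(C,-\alpha)$ operator acting only on the $x$-variable and $T_2$ the partial Ces\`{a}ro $(C,-\beta)$ operator on the $y$-variable. Since the 2D Ces\`{a}ro kernel factorises as a product of 1D Ces\`{a}ro kernels, $\sigma_{n,m}^{-\alpha,-\beta}(f) = T_1 T_2 f$, and the algebraic identity
\begin{equation*}
T_1 T_2 - I = (T_1 - I)(T_2 - I) + (T_1 - I) + (T_2 - I)
\end{equation*}
splits the approximation error into three pieces.

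For $(T_1 - I)f$ I use that $T_1$ is convolution in the $x$-variable with the 1D Ces\`{a}ro kernel $K_n^{-\alpha}$, so that $(T_1 - I)f(x,y) = \int_{G_m} K_n^{-\alpha}(t)\bigl[f(x-t,y)-f(x,y)\bigr]\,d\mu(t)$. The dyadic decomposition of this kernel used in the proof of Theorem T transcribes line-by-line to the present setting, with the one-variable modulus of continuity replaced by the partial modulus $\omega_1$, which is already defined through the full $L^p(G_m^2)$ norm and therefore commutes with Minkowski's inequality applied to convolution in $x$ alone. The outcome is
\begin{equation*}
\|(T_1-I)f\|_p \leq c(\alpha)\Bigl(M_k^{\alpha}\omega_1(f,1/M_{k-1})_p + \sum_{r=0}^{k-2}\frac{M_r}{M_k}\omega_1(f,1/M_r)_p\Bigr),
\end{equation*}
and symmetrically for $(T_2-I)f$ in terms of $\omega_2$ and $\beta$.

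The genuinely new ingredient is the cross term. Composing the two convolutions yields
\begin{equation*}
(T_1-I)(T_2-I)f(x,y) = \iint K_n^{-\alpha}(t)K_m^{-\beta}(s)\,\Delta_{t,s}f(x,y)\,d\mu(t)\,d\mu(s),
\end{equation*}
where $\Delta_{t,s}f := f(x-t,y-s)-f(x-t,y)-f(x,y-s)+f(x,y)$. Minkowski's inequality reduces the $L^p$ norm to $\iint|K_n^{-\alpha}(t)||K_m^{-\beta}(s)|\,\|\Delta_{t,s}f\|_p\,d\mu(t)\,d\mu(s)$. I then partition $G_m\times G_m$ into the product dyadic shells $(I_r\setminus I_{r+1})\times(I_s\setminus I_{s+1})$ together with the innermost square $I_{k-1}\times I_{l-1}$, bound $\|\Delta_{t,s}f\|_p \leq \omega_{1,2}(f,1/M_r,1/M_s)_p$ on each shell, and insert the one-variable kernel estimates from the proof of Theorem T separately on each of the two kernel factors. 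On the innermost square the product of the kernel bounds produces the weight $M_k^{\alpha}M_l^{\beta}$ and hence the mixed term $M_k^{\alpha}M_l^{\beta}\omega_{1,2}(f,1/M_{k-1},1/M_{l-1})_p$; on every off-diagonal shell the inequality $\omega_{1,2}\leq\omega_1+\omega_2$ lets the contribution collapse into the two one-variable sums.

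The main obstacle is the bookkeeping in this last step: one has to check that the innermost square is the \emph{only} piece which genuinely requires the mixed modulus $\omega_{1,2}$, and that every off-diagonal product shell is absorbed by the $\omega_1$- and $\omega_2$-sums inherited from Theorem T. This amounts to re-using, in product form, the precise decay estimates for the negative-order Ces\`{a}ro kernel on $G_m$ that drive the proof of Theorem T in \cite{Te}.
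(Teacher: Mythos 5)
Your route is genuinely different from the paper's: there, Corollary \ref{C1} is deduced immediately from Theorem \ref{T3}, whose proof splits the double kernel sum over the four index regions $[0,M_{k-1})\times[0,M_{l-1})$, $[M_{k-1},n]\times[0,M_{l-1})$, etc.\ (Lemmas \ref{L2}--\ref{L4}), whereas you factorise the operator as $T_1T_2$ and expand $T_1T_2-I=(T_1-I)(T_2-I)+(T_1-I)+(T_2-I)$ so that the one-dimensional Theorem T can be imported wholesale for the two pure terms. The two decompositions are structurally parallel: your inner/tail product shells for the cross term reproduce the paper's terms $I$, $II$, $III$, $IV$, with the tail$\times$tail block giving $M_k^{\alpha}M_l^{\beta}\omega_{1,2}$ and the mixed blocks collapsing via $\omega_{1,2}\le\omega_1+\omega_2$; and the transference of Theorem T to the partial operators through the generalized Minkowski inequality is legitimate because the partial moduli are defined via the full $L^p(G_m^2)$ norm. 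What your approach buys is economy: it avoids redoing the Abel transformation and the $L^1$ kernel estimate of Lemma \ref{L1} in two variables.

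There is, however, a gap at the end: you establish (a version of) the error estimate of Theorem \ref{T3} but never derive the actual conclusion of the corollary, namely that the right-hand side tends to zero under the stated hypotheses. The terms $M_k^{\alpha}\omega_1(f,1/M_{k-1})_p$, $M_l^{\beta}\omega_2(f,1/M_{l-1})_p$ and $M_k^{\alpha}M_l^{\beta}\omega_{1,2}(f,1/M_{k-1},1/M_{l-1})_p$ do vanish by hypothesis (after replacing $M_{k-1}$ by $M_k$, which costs only the bounded factor $\sup_j m_j^{\alpha+\beta}$), but the sums $\sum_{r=0}^{k-2}(M_r/M_k)\,\omega_1(f,1/M_r)_p$ and $\sum_{s=0}^{l-2}(M_s/M_l)\,\omega_2(f,1/M_s)_p$ need a separate, if short, argument: use $M_r/M_k\le 2^{r-k}$ together with $\omega_1(f,1/M_r)_p\to 0$, split the sum at a fixed index $R$, let $k\to\infty$, and then let $R\to\infty$. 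Without this Kronecker-type step the passage from the quantitative bound to the convergence statement of Corollary \ref{C1} is missing; you should add it explicitly.
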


\begin{equation*}
M_{k}^{\alpha }M_{l}^{\beta }\omega _{1},_{2}\left( f,\frac{1}{M_{k}},\frac{1%
}{M_{l}}\right) _{p}\rightarrow 0\text{\ \ \ \ as \ }\ k,l\rightarrow \infty
,
\end{equation*}

then%
\begin{equation*}
\left\Vert \sigma _{n,m}^{-\alpha ,-\beta }\left( f\right) -f\right\Vert
_{p}\rightarrow 0\ \,\ \ \text{as \ }n,m\rightarrow \infty .
\end{equation*}

\begin{corollary}
\label{C2} Let $f\ $belong to $L^{p}$ for some $p\in \left[ 1,\infty \right] 
$ and let $\alpha ,\beta \in \left( 0,1\right) ,$ $\alpha +\beta <1.$ If

\begin{equation*}
\omega \left( f,\frac{1}{M_{n}}\right) _{p}=o\left( \left( \frac{1}{M_{n}}%
\right) ^{\alpha +\beta }\right) ,
\end{equation*}

then%
\begin{equation*}
\left\Vert \sigma _{n,m}^{-\alpha ,-\beta }\left( f\right) -f\right\Vert
_{p}\rightarrow 0\ \,\ \ \text{as \ }n,m\rightarrow \infty .
\end{equation*}
\end{corollary}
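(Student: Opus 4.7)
The plan is to apply Theorem~\ref{T3} and verify that each of its five summands tends to zero as $n, m \to \infty$ under the present hypothesis. Since the hypothesis controls only the total modulus $\omega(f, 1/M_n)_p$, while Theorem~\ref{T3} features $\omega_1, \omega_2$ and $\omega_{1,2}$, the preliminary step is to bound the latter three by $\omega$. The partial moduli satisfy the trivial $\omega_j(f, 1/M_n)_p \leq \omega(f, 1/M_n)_p$ ($j=1,2$), obtained by setting one of the two shifts to zero. For the mixed modulus, rewriting the second-difference operator as a first difference in the $y$-variable of $h(x,y) := f(x-u,y) - f(x,y)$ yields
\begin{equation*}
\omega_{1,2}(f, 1/M_{k-1}, 1/M_{l-1})_p \leq 2\min\{\omega_1(f, 1/M_{k-1})_p, \omega_2(f, 1/M_{l-1})_p\} \leq 2\omega(f, 1/M_{\max(k,l)-1})_p.
\end{equation*}
Throughout I use that the bounded Vilenkin assumption gives $M_n \leq C M_{n-1}$ for an absolute constant $C$.

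Given $\varepsilon > 0$, fix $N_0$ with $\omega(f, 1/M_n)_p \leq \varepsilon M_n^{-\alpha-\beta}$ for all $n \geq N_0$, and take $k, l \geq N_0 + 1$. Each of the first three summands of Theorem~\ref{T3} is then $O(\varepsilon)$. The first is at most $M_k^\alpha \varepsilon M_{k-1}^{-\alpha-\beta} \leq C^\alpha \varepsilon M_{k-1}^{-\beta} \leq C^\alpha \varepsilon$, the second is symmetric, and for the third, writing $N := \max(k,l)$ and applying the displayed estimate,
\begin{equation*}
M_k^\alpha M_l^\beta \omega_{1,2}(f, 1/M_{k-1}, 1/M_{l-1})_p \leq 2 \varepsilon M_k^\alpha M_l^\beta M_{N-1}^{-\alpha-\beta} \leq 2 C^{\alpha+\beta} \varepsilon,
\end{equation*}
since $M_k, M_l \leq M_N \leq C M_{N-1}$.

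For the two tail sums, it suffices to handle $\sum_{r=0}^{k-2}(M_r/M_k)\omega_1(f, 1/M_r)_p$; the $s$-sum is analogous. Splitting at $r = N_0$, the low-index piece is bounded by $2\|f\|_p (N_0+1)(M_{N_0}/M_k)$, which vanishes as $k \to \infty$. The high-index piece is at most $\varepsilon M_k^{-1}\sum_{r=N_0+1}^{k-2} M_r^{1-\alpha-\beta}$; since $\alpha + \beta < 1$ and $M_r$ grows at least by a factor of $2$ per step, this geometric-type sum is dominated by its last term $M_{k-2}^{1-\alpha-\beta}$, so the whole piece is $O(\varepsilon)$. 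Combining all five estimates yields $\|\sigma_{n,m}^{-\alpha,-\beta}(f) - f\|_p = O(\varepsilon)$ for $n, m$ sufficiently large, which proves the corollary.

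The main subtlety is the third summand: a naive triangle-inequality bound of the form $\omega_{1,2}(f, 1/M_{k-1}, 1/M_{l-1})_p \leq 3\omega(f, 1/M_{\min(k,l)-1})_p$ would leave an unabsorbable positive power of $M_{\max(k,l)}$ when $k$ and $l$ differ significantly, so it is essential to secure $\max$ (rather than $\min$) in the argument of $\omega$; this is exactly what the telescoping decomposition through the auxiliary function $h$ achieves. The hypothesis $\alpha + \beta < 1$ enters only in the final step to keep $\sum M_r^{1-\alpha-\beta}$ comparable to its last term.
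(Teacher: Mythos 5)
Your proof is correct, but at the one point where Corollary \ref{C2} requires an actual idea --- the mixed-modulus term --- it takes a genuinely different route from the paper. The paper does not re-derive the tail estimates at all: it reduces Corollary \ref{C2} to Corollary \ref{C1}, and its entire written proof is the interpolation inequality
\begin{equation*}
\omega _{1,2}=\left( \omega _{1,2}\right) ^{\frac{\alpha }{\alpha +\beta }
}\left( \omega _{1,2}\right) ^{\frac{\beta }{\alpha +\beta }}\leq 2\left(
\omega _{1}\left( f,\tfrac{1}{M_{n}}\right) \right) ^{\frac{\alpha }{\alpha
+\beta }}\left( \omega _{2}\left( f,\tfrac{1}{M_{m}}\right) \right) ^{\frac{
\beta }{\alpha +\beta }}\leq 2\left( \omega \left( f,\tfrac{1}{M_{n}}\right)
\right) ^{\frac{\alpha }{\alpha +\beta }}\left( \omega \left( f,\tfrac{1}{
M_{m}}\right) \right) ^{\frac{\beta }{\alpha +\beta }},
\end{equation*}
which splits the required decay multiplicatively: under the hypothesis, $M_{n}^{\alpha }\left( \omega \left( f,1/M_{n}\right) \right) ^{\alpha /(\alpha +\beta )}\rightarrow 0$ and $M_{m}^{\beta }\left( \omega \left( f,1/M_{m}\right) \right) ^{\beta /(\alpha +\beta )}\rightarrow 0$ separately, so no comparison between the two indices is ever needed. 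You instead use $\omega _{1,2}\leq 2\min \{\omega _{1},\omega _{2}\}\leq 2\,\omega \left( f,1/M_{\max (k,l)-1}\right) $ together with the bounded-Vilenkin ratio $M_{N}\leq CM_{N-1}$; since $\min \{a,b\}\leq a^{\theta }b^{1-\theta }$, your bound is in fact sharper than the paper's, and your closing diagnosis of why securing $\max $ rather than $\min $ is essential is exactly the issue the interpolation trick is designed to evade. The remaining difference is one of economy: the paper delegates the first two summands and the two tail sums to Corollary \ref{C1}, which is stated without proof, whereas you verify everything directly from Theorem \ref{T3}; your splitting of the tail sum at $N_{0}$ and the geometric domination $\sum_{r}M_{r}^{1-\alpha -\beta }\leq c(\alpha ,\beta )M_{k-2}^{1-\alpha -\beta }$ (valid since every $m_{j}\geq 2$ and $\alpha +\beta <1$) correctly locate where the hypothesis $\alpha +\beta <1$ enters, and this step is implicit in both treatments. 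So your argument is more self-contained, while the paper's is shorter and its interpolation device is the more flexible tool when the two partial moduli decay at different rates. One small remark: your telescoping through $h(x,y)=f(x-u,y)-f(x,y)$ yields only $\omega _{1,2}\leq 2\omega _{1}$; the companion bound $\omega _{1,2}\leq 2\omega _{2}$ needs the symmetric auxiliary function $g(x,y)=f(x,y-v)-f(x,y)$, and since your $\min $ requires both inequalities, you should say so explicitly.
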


The following theorem shows that Corollary 2 cannot be improved.

\begin{theorem}
\label{T4} For every $\alpha ,\beta \in \left( 0,1\right) ,$ $\alpha +\beta
<1,$ there exists a function $f_{0}\in C\left(  G_{m}^{2}\right) $
for which

\begin{equation*}
\omega \left( f,\frac{1}{M_{n}}\right) _{C}=O\left( \left( \frac{1}{M_{n}}%
\right) ^{\alpha +\beta }\right) ,
\end{equation*}

and%
\begin{equation*}
\lim \sup_{n\rightarrow \infty }\left\Vert \sigma _{M_{n},M_{n}}^{-\alpha
,-\beta }\left( f\right) -f\right\Vert _{1}>0.
\end{equation*}
\end{theorem}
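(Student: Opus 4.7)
The plan is to exhibit $f_{0}$ explicitly as a lacunary series of Vilenkin characters concentrated at frequencies of the form $(M_{n_{k}},M_{n_{k}})$. Specifically, fix a rapidly increasing sequence of positive integers $\{n_{k}\}_{k\geq 1}$ (for concreteness $n_{k}=2^{k}$), set $a_{k}:=M_{n_{k}}^{-(\alpha+\beta)}$, and define
\begin{equation*}
f_{0}(x,y):=\sum_{k=1}^{\infty }a_{k}\,\psi _{M_{n_{k}}}(x)\,\psi _{M_{n_{k}}}(y).
\end{equation*}
Since $|\psi _{M_{n_{k}}}|=1$ and $\sum_{k}a_{k}<\infty $ by the super-geometric growth of $M_{n_{k}}$, the series converges uniformly and $f_{0}\in C(G_{m}^{2})$. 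Its only nonzero Fourier coefficients are $\widehat{f_{0}}(M_{n_{k}},M_{n_{k}})=a_{k}$.

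The next step is to verify the claimed modulus bound. Recall that $\psi _{M_{n_{k}}}(x)=r_{n_{k}}(x)$ depends only on the coordinate $x_{n_{k}}$. For $(u,v)\in I_{N}\times I_{N}$ and any $n_{k}<N$, the condition $u_{j}=0$ for $j<N$ forces $(x-u)_{n_{k}}=x_{n_{k}}$, so every term with $n_{k}<N$ cancels. What remains satisfies
\begin{equation*}
|f_{0}(x-u,y-v)-f_{0}(x,y)|\leq 2\sum_{k:\,n_{k}\geq N}a_{k}=O\bigl(M_{N}^{-(\alpha+\beta)}\bigr),
\end{equation*}
where the last estimate uses the lacunarity to bound the tail by its first term and the fact that $n_{k_{0}}\geq N$ gives $M_{n_{k_{0}}}^{-(\alpha+\beta)}\leq M_{N}^{-(\alpha+\beta)}$. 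Hence $\omega(f_{0},1/M_{N})_{C}=O(M_{N}^{-(\alpha+\beta)})$.

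For the lower bound on $\|\sigma _{M_{n_{k}},M_{n_{k}}}^{-\alpha ,-\beta }(f_{0})-f_{0}\|_{1}$, I would compute directly from the definition. Setting
\begin{equation*}
w_{k'}:=\frac{A_{M_{n_{k}}-M_{n_{k'}}}^{-\alpha }A_{M_{n_{k}}-M_{n_{k'}}}^{-\beta }}{A_{M_{n_{k}}}^{-\alpha }A_{M_{n_{k}}}^{-\beta }},
\end{equation*}
the difference reads
\begin{equation*}
\sigma _{M_{n_{k}},M_{n_{k}}}^{-\alpha ,-\beta }(f_{0})-f_{0}=\sum_{k'=1}^{k}(w_{k'}-1)a_{k'}\psi _{M_{n_{k'}}}\psi _{M_{n_{k'}}}-\sum_{k'>k}a_{k'}\psi _{M_{n_{k'}}}\psi _{M_{n_{k'}}}.
\end{equation*}
By \eqref{for4}, $w_{k}=(A_{M_{n_{k}}}^{-\alpha }A_{M_{n_{k}}}^{-\beta })^{-1}\sim cM_{n_{k}}^{\alpha +\beta }$ with $c=\Gamma (1-\alpha )\Gamma (1-\beta )>0$, hence $(w_{k}-1)a_{k}\to c$; and a first-order expansion of the ratio $A_{M-r}^{-\gamma }/A_{M}^{-\gamma }$ for $r\ll M$ yields $|w_{k'}-1|=O(M_{n_{k'}}/M_{n_{k}})$ for $k'<k$. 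Since each character has $L^{1}$-norm equal to $1$, the reverse triangle inequality gives
\begin{equation*}
\|\sigma _{M_{n_{k}},M_{n_{k}}}^{-\alpha ,-\beta }(f_{0})-f_{0}\|_{1}\geq |(w_{k}-1)a_{k}|-\sum_{k'<k}|w_{k'}-1|a_{k'}-\sum_{k'>k}a_{k'},
\end{equation*}
and both tails vanish in the limit, so $\limsup_{n}\|\sigma _{M_{n},M_{n}}^{-\alpha ,-\beta }(f_{0})-f_{0}\|_{1}\geq c>0$.

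The principal obstacle is controlling the $L^{1}$ size of the middle tail $\sum_{k'<k}|w_{k'}-1|a_{k'}$: with the bounds above it is $O\bigl(\sum_{k'<k}M_{n_{k'}}^{1-\alpha -\beta }/M_{n_{k}}\bigr)$, and this is precisely the place where the hypothesis $\alpha +\beta <1$ interacts with the choice of lacunarity — the exponent $1-\alpha -\beta $ is positive, so one must take $n_{k}$ growing fast enough (as $2^{k}$ easily suffices) for the tail to be $o(1)$. The remaining technicality is a careful Taylor-type expansion of $A_{M-r}^{-\gamma }/A_{M}^{-\gamma }$ around $1$, which is routine from the asymptotic \eqref{for4}.
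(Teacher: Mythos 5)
Your construction is essentially the paper's: the paper takes $f(x,y)=\sum_{j\geq 1}M_{j}^{-(\alpha+\beta)}r_{j}(x)r_{j}(y)$, i.e.\ your series with $n_{k}=k$ (no lacunary thinning is needed), and proves the modulus-of-continuity bound exactly as you do, by noting $r_{j}(x-u)=r_{j}(x)$ for $u\in I_{N}$, $j<N$, and summing the tail. The one genuine difference is how the $L^{1}$ lower bound is extracted. You expand $\sigma_{M_{n_{k}},M_{n_{k}}}^{-\alpha,-\beta}(f_{0})-f_{0}$ over all frequencies and apply the reverse triangle inequality, which forces you to control the middle tail $\sum_{k'<k}|w_{k'}-1|a_{k'}$ via an expansion of $A_{M-r}^{-\gamma}/A_{M}^{-\gamma}$ --- precisely the step you flag as the principal obstacle. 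The paper instead pairs $\sigma_{M_{k},M_{k}}^{-\alpha,-\beta}(f)-f$ against the single character $\psi_{M_{k}}(x)\psi_{M_{k}}(y)$ and uses $\|g\|_{1}\geq\bigl|\int g\,\bar{\chi}\,d\mu\bigr|$; orthogonality then isolates exactly one Fourier coefficient, giving
\begin{equation*}
\bigl\Vert \sigma_{M_{k},M_{k}}^{-\alpha,-\beta}(f)-f\bigr\Vert_{1}\geq\Bigl(\tfrac{1}{A_{M_{k}}^{-\alpha}A_{M_{k}}^{-\beta}}-1\Bigr)\bigl|\widehat{f}(M_{k},M_{k})\bigr|\geq c(\alpha,\beta)>0
\end{equation*}
with no tail estimates whatsoever. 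Your route does close: $|w_{k'}-1|=O(M_{n_{k'}}/M_{n_{k}})$ follows from $A_{M-r}^{-\gamma}/A_{M}^{-\gamma}=\prod_{j=M-r+1}^{M}j/(j-\gamma)$, and the resulting middle tail is $o(1)$ even without lacunarity, since $\sum_{k'<k}M_{n_{k'}}^{1-\alpha-\beta}/M_{n_{k}}=O\bigl(M_{n_{k-1}}^{-(\alpha+\beta)}\bigr)$; but the duality argument buys the same conclusion with none of that bookkeeping, and is the reason the paper never needs the hypothesis that the frequencies are sparse.
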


In order to prove Theorem \ref{T3} we need the following lemmas

\begin{lemma}
\label{L1}\cite{AVDR}Let $\alpha _{1},...,\alpha _{n}$ be real numbers.Then
\end{lemma}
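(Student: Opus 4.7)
The statement of Lemma~\ref{L1} is truncated in the excerpt, but in context (an AVDR citation used in the proof of a Ces\`{a}ro-approximation theorem, with real numbers $\alpha_1,\dots,\alpha_n$ as the sole input) the overwhelmingly standard candidate is Abel's summation by parts, typically written in the Vilenkin setting as
\begin{equation*}
\sum_{k=1}^{n}\alpha_{k}\psi_{k}(x)
=\sum_{k=1}^{n-1}(\alpha_{k}-\alpha_{k+1})D_{k+1}(x)+\alpha_{n}D_{n+1}(x)-\alpha_{1}D_{1}(x),
\end{equation*}
or the analogous identity with general sequences in place of $\psi_k$. My plan assumes this (or a close variant) is what is to be proved.

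The approach is purely algebraic. First I would introduce the partial Dirichlet-type sums $D_{k}:=\sum_{j=0}^{k-1}\psi_{j}$ (already defined in the paper) and observe the telescoping identity $\psi_{k}=D_{k+1}-D_{k}$. Substituting this into $\sum_{k=1}^{n}\alpha_{k}\psi_{k}$ splits the sum into two pieces, one indexed by $D_{k+1}$ and one by $D_{k}$. Second, I would re-index the second piece (shifting $k\mapsto k-1$) so both pieces are indexed over a common range, which lets me collect coefficients of each $D_{k}$. Third, I would read off the boundary terms (at $k=1$ and $k=n$) that survive the re-indexing, producing the claimed identity.

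If instead the lemma is the more general ``scalar'' Abel transformation $\sum_{k=1}^{n}\alpha_{k}\beta_{k}=\sum_{k=1}^{n-1}(\alpha_{k}-\alpha_{k+1})B_{k}+\alpha_{n}B_{n}$, with $B_{k}:=\sum_{j=1}^{k}\beta_{j}$, the same three-step procedure works verbatim after replacing the telescoping $\psi_{k}=D_{k+1}-D_{k}$ by $\beta_{k}=B_{k}-B_{k-1}$ (with the convention $B_{0}=0$); the rearrangement and boundary-bookkeeping are identical.

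There is no real obstacle here: every step is a finite-sum manipulation. The only place one has to be careful is the bookkeeping of the endpoints after the index shift, because an off-by-one error will put the ``$\alpha_n$'' term on the wrong kernel $D_n$ versus $D_{n+1}$, which matters when the identity is later applied inside an estimate for $\sigma_{n,m}^{-\alpha,-\beta}$. I would therefore verify the $n=1$ and $n=2$ cases by hand at the end to confirm the boundary constants are correctly placed before using the lemma elsewhere in the paper.
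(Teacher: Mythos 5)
You have proved the wrong statement. Due to the paper's formatting, the body of Lemma~\ref{L1} is the display that immediately follows the lemma environment, namely the Sidon-type inequality
\begin{equation*}
\frac{1}{n}\int\limits_{G_{m}}\left\vert \sum\limits_{k=1}^{n}\alpha _{k}D_{k}(x)\right\vert d\mu (x)\leq \frac{c}{\sqrt{n}}\left( \sum\limits_{k=1}^{n}\alpha _{k}^{2}\right) ^{1/2},
\end{equation*}
an $L^{1}(G_{m})$ bound for an arbitrary real linear combination of Vilenkin--Dirichlet kernels in terms of the $\ell^{2}$ norm of the coefficients. This is exactly how the lemma is used later: in the estimates of $I_{11}$ and $II_{1}$ the author bounds $\int_{G_{m}}\vert \sum_{i=M_{r}}^{M_{r+1}-1}A_{n-i+1}^{-\alpha -1}D_{i}(u)\vert \,d\mu(u)$ by $\sqrt{M_{r+1}}\bigl( \sum_{i}(n-i+1)^{-2\alpha -2}\bigr) ^{1/2}$, which is this inequality applied to coefficients supported on one block. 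The paper itself gives no proof --- the lemma is quoted from the monograph \cite{AVDR} --- but your proposal does not establish it. The Abel transformation you describe does occur in the paper, yet as the opening step of the proof of Lemma~\ref{L2} (the passage to (\ref{for5}) via (\ref{for3})), where it converts $\sum A_{n-i}^{-\alpha}\psi_{i}$ into $\sum A_{n-i+1}^{-\alpha-1}D_{i}$ plus boundary terms; it is an ingredient used \emph{alongside} Lemma~\ref{L1}, not the content of Lemma~\ref{L1}.

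More importantly, your plan cannot be repaired into a proof of the actual lemma: summation by parts is a finite algebraic identity, whereas the lemma is an analytic norm inequality, and no rearrangement of the sum alone can produce the gain $\sqrt{n}$ over the $\ell^{2}$ coefficient norm. Note also that the crude route ``Cauchy--Schwarz on $G_{m}$, i.e.\ $\Vert \cdot \Vert _{1}\leq \Vert \cdot \Vert _{2}$'' fails: since $\int_{G_{m}}D_{k}\bar{D}_{j}\,d\mu =\min (k,j)$, taking $\alpha _{k}\equiv 1/\sqrt{n}$ gives $\Vert \sum \alpha _{k}D_{k}\Vert _{2}\sim n$ while the claimed bound is $c\sqrt{n}$, so the inequality genuinely exploits $L^{1}$ cancellation. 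A proof along the lines of \cite{AVDR} uses the digit structure of the kernels: expand $D_{k}$ through the kernels $D_{M_{j}}$ (the Vilenkin analogue of $D_{k}=\psi _{k}\sum_{j}k_{j}r_{j}D_{M_{j}}$ in the Walsh case), use that $D_{M_{j}}=M_{j}\mathbf{1}_{I_{j}}$ by (\ref{for1}) to localize the $j$-th piece on $I_{j}$, apply Cauchy--Schwarz on $I_{j}$ together with the orthonormality of the Vilenkin system to get a contribution $\leq M_{j}^{1/2}\bigl(\sum_{k}\alpha _{k}^{2}\bigr)^{1/2}$, and then sum the geometric series over $j$ with $M_{j}\leq n$ to produce the factor $\sqrt{n}$. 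That measure-theoretic and orthogonality input is entirely absent from your proposal, so there is a genuine gap: the statement you set out to prove is not the statement of the lemma, and the method you chose cannot reach the true one.
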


\begin{equation*}
\frac{1}{n}\int\limits_{G_{m}}\left\vert \sum\limits_{k=1}^{n}\alpha
_{k}D_{k}(x)\right\vert d\mu (x)\leq \frac{c}{\sqrt{n}}\left(
\sum\limits_{k=1}^{n}\alpha _{k}^{2}\right) ^{1/2}.
\end{equation*}

where $c$ is an absolute constant.

\begin{lemma}
\label{L2}Let $f$ $\in L^{p}(G_{m}^{2})$ for some $p$ $\in $ $\left[
1,\infty \right] .$ Then for every $\alpha ,\beta $ $\in $ $\left(
0,1\right) $ the following estimations holds%
\begin{equation*}
I:=\frac{1}{A_{n}^{-\alpha }A_{m}^{-\beta }}\left\Vert \text{ }%
\int\limits_{G_{m}^{2}}\sum\limits_{i=0}^{M_{k-1}-1}\sum%
\limits_{j=0}^{M_{l-1}-1}A_{n-i}^{-\alpha }A_{m-j}^{-\beta }\psi _{i}\left(
u\right) \psi _{j}\left( v\right) \times \right.
\end{equation*}

\begin{equation*}
\left. \times \left[ f\left( \cdot -u,\cdot -v\right) -f\left( \cdot ,\cdot
\right) \right] d\mu \left( u,v\right) \right\Vert _{p}\leq
\end{equation*}

\begin{equation*}
\leq c\left( \alpha ,\beta \right) \left( \sum\limits_{r=0}^{k-1}\frac{M_{r}%
}{M_{k}}\omega _{1}\left( f,1/M_{r}\right) _{p}+\sum\limits_{s=0}^{l-1}\frac{%
M_{s}}{M_{l}}\omega _{2}\left( f,1/M_{s}\right) _{p}\right) ,
\end{equation*}
\end{lemma}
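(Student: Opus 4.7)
My plan is to exploit the tensor-product structure of the kernel. Writing $K_1(u) := \sum_{i=0}^{M_{k-1}-1} A_{n-i}^{-\alpha} \psi_i(u)$ and analogously $K_2(v) := \sum_{j=0}^{M_{l-1}-1} A_{m-j}^{-\beta} \psi_j(v)$, the inner double sum equals $K_1(u)K_2(v)$. I split the increment as
\[
f(x-u,y-v) - f(x,y) = [f(x-u,y-v) - f(x,y-v)] + [f(x,y-v) - f(x,y)],
\]
so that $I \leq I_1 + I_2$. By the orthogonality relation $\int_{G_m}\psi_i\, d\mu = \delta_{i,0}$ we have $\int_{G_m} K_1(u)\,d\mu(u) = A_n^{-\alpha}$, which makes $I_2$ collapse to a purely one-dimensional object in $v$, acting slicewise on $f(x,\cdot)$; the one-dimensional analog of this lemma (the low-frequency half of Theorem T, proved in \cite{Te}) followed by Minkowski in $x$ bounds it by $c(\beta)\sum_{s=0}^{l-1}(M_s/M_l)\omega_2(f,1/M_s)_p$.

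For $I_1$, the generalized Minkowski inequality in $v$ combined with translation invariance of $\|\cdot\|_p$ in the second variable yields
\[
I_1 \leq \frac{\|K_2\|_1}{A_m^{-\beta}} \cdot \frac{1}{A_n^{-\alpha}}\left\|\int_{G_m} K_1(u)\,[f(\cdot - u,\cdot) - f(\cdot,\cdot)]\, d\mu(u)\right\|_p.
\]
The inner $L^p$-norm is treated by the one-dimensional analog applied slicewise in $y$, producing the contribution $c(\alpha)\sum_{r=0}^{k-1}(M_r/M_k)\omega_1(f,1/M_r)_p$, so everything reduces to checking the uniform bound $\|K_2\|_1 \leq c\, A_m^{-\beta}$.

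To establish this bound I would apply Abel summation via (\ref{for3}) to write
\[
K_2(v) = A_{m-M_{l-1}+1}^{-\beta}\, D_{M_{l-1}}(v) + \sum_{j=0}^{M_{l-1}-2} A_{m-j}^{-\beta-1}\, D_{j+1}(v).
\]
The boundary term has $L^1$-norm of order $M_l^{-\beta}$ because $\|D_{M_{l-1}}\|_1 = 1$ and $A_{m-M_{l-1}+1}^{-\beta} \sim M_l^{-\beta}$ by (\ref{for4}). For the sum, Lemma \ref{L1} with $|A_{m-j}^{-\beta-1}| \leq c\, M_l^{-\beta-1}$ and $N = M_{l-1}-1$ gives an $L^1$-norm of order at most $M_{l-1}\cdot M_l^{-\beta-1} \leq M_l^{-\beta}$, still comparable to $A_m^{-\beta}$, since $(M_k)$ is lacunary under the bounded-Vilenkin hypothesis.

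The main technical obstacle is the reduction of $I_1$ to the one-dimensional lemma: translation invariance of the $L^p$-norm in $y$ must be invoked carefully so that the $v$-dependence factors out uniformly and the slicewise estimate is truly independent of $v$. The uniform $L^1$-bound on $K_2/A_m^{-\beta}$ is the step where Lemma \ref{L1} is essential; it is exactly the bounded-Vilenkin estimate for weighted sums of Dirichlet kernels that closes the argument, and parallel considerations for $K_1$ (interchanging the roles of the two variables) complete the symmetric side of the decomposition.
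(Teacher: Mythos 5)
Your decomposition is genuinely different from the paper's and, at the level of strategy, cleaner. The paper applies Abel's transformation in both indices simultaneously, obtaining four terms, and handles the doubly-summed Dirichlet-kernel term by splitting it over blocks $[M_r,M_{r+1})\times[M_s,M_{s+1})$ and inserting the rectangular partial sums $S_{M_r,M_s}f$, with the middle term vanishing by orthogonality. You instead split the increment of $f$ first, factor the tensor-product kernel $K_1(u)K_2(v)$, and reduce each half to a one-dimensional problem at the cost of the uniform bound $\|K_2\|_1\le c\,A_m^{-\beta}$ (and symmetrically for $K_1$). Your verification of that bound --- Abel summation via (\ref{for3}), the identity (\ref{for1}) for the boundary term, and Lemma \ref{L1} with $|A_{m-j}^{-\beta-1}|\le c(m-M_{l-1}+2)^{-\beta-1}\le c\,M_l^{-\beta-1}$ for the remaining sum --- is correct, and is the exact low-frequency counterpart of the estimate (\ref{for22.1}) that the paper imports from \cite{Te} for the high-frequency block. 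The observation that $\int_{G_m}K_1\,d\mu=A_n^{-\alpha}$ makes the second half collapse exactly is also correct. What you gain is the avoidance of the double block sum over $(r,s)$ and of the mixed term altogether; what the paper's route gains is self-containedness.

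One step, as literally written, does not work: ``the one-dimensional analog applied slicewise in $y$ \dots\ followed by Minkowski in $x$.'' If you apply the one-dimensional lemma to each slice $f(\cdot,y)$ you obtain a bound in terms of $\omega(f(\cdot,y),1/M_r)_{L^p(G_m)}=\sup_{u\in I_r}\|f(\cdot-u,y)-f(\cdot,y)\|_{L^p(G_m)}$, and upon taking the $L^p$-norm in $y$ you are faced with
\begin{equation*}
\Bigl(\int_{G_m}\sup_{u\in I_r}\bigl\Vert f(\cdot-u,y)-f(\cdot,y)\bigr\Vert_{L^p_x}^{\,p}\,d\mu(y)\Bigr)^{1/p},
\end{equation*}
which \emph{dominates} $\omega_1(f,1/M_r)_p$ rather than being dominated by it: the supremum and the integral are in the wrong order. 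The fix is standard but must be stated: do not invoke the one-dimensional lemma as a black box on slices; rerun its proof with the second variable carried inertly inside the full $L^p(G_m^2)$-norm. Every step of that proof is a generalized Minkowski inequality in $u$ followed by an $L^1$ bound on a kernel, so the quantity $\sup_{u\in I_r}\|f(\cdot-u,\cdot)-f(\cdot,\cdot)\|_{L^p(G_m^2)}=\omega_1(f,1/M_r)_p$ appears directly, with the supremum outside the two-variable norm; this is in effect what the paper does for its terms $I_{22}$ and $I_{13}$. With that replacement your argument closes.
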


where $M_{k}\leq n<M_{k+1},M_{l}\leq m<M_{l+1}.$\newline

\begin{proof}[Proof of Lemma \protect\ref{L2}]
Applying Abel's transformation, from (\ref{for2}) we get

\begin{equation}
I\leq \frac{1}{A_{n}^{-\alpha }A_{m}^{-\beta }}\left\Vert \text{ }%
\int\limits_{G_{m}^{2}}\sum\limits_{i=1}^{M_{k-1}-1}\sum%
\limits_{j=1}^{M_{l-1}-1}A_{n-i+1}^{-\alpha -1}A_{m-j+1}^{-\beta
-1}D_{i}\left( u\right) D_{j}\left( v\right) \times \right.  \label{for5}
\end{equation}

\begin{equation*}
\left. \times \left[ f\left( \cdot -u,\cdot -v\right) -f\left( \cdot ,\cdot
\right) \right] d\mu \left( u,v\right) \right\Vert _{p}
\end{equation*}

\begin{equation*}
+\frac{1}{A_{n}^{-\alpha }A_{m}^{-\beta }}\left\Vert \text{ }%
\int\limits_{G_{m}^{2}}A_{m-M_{l-1}+1}^{-\beta }D_{M_{l-1}}\left( v\right)
\sum\limits_{i=1}^{M_{k-1}-1}A_{n-i+1}^{-\alpha -1}D_{i}\left( u\right)
\times \right.
\end{equation*}

\begin{equation*}
\left. \times \left[ f\left( \cdot -u,\cdot -v\right) -f\left( \cdot ,\cdot
\right) \right] d\mu \left( u,v\right) \right\Vert _{p}
\end{equation*}

\begin{equation*}
+\frac{1}{A_{n}^{-\alpha }A_{m}^{-\beta }}\left\Vert \text{ }%
\int\limits_{G_{m}^{2}}A_{n-M_{k-1}+1}^{-\alpha }D_{M_{k-1}}\left( u\right)
\sum\limits_{j=1}^{M_{l-1}-1}A_{m-j+1}^{-\beta -1}D_{j}\left( v\right)
\times \right.
\end{equation*}

\begin{equation*}
\left. \times \left[ f\left( \cdot -u,\cdot -v\right) -f\left( \cdot ,\cdot
\right) \right] d\mu \left( u,v\right) \right\Vert _{p}
\end{equation*}

\begin{equation*}
+\frac{1}{A_{n}^{-\alpha }A_{m}^{-\beta }}\left\Vert \text{ }%
\int\limits_{G_{m}^{2}}A_{n-M_{k-1}+1}^{-\alpha }A_{m-M_{l-1}+1}^{-\beta
}D_{M_{k-1}}\left( u\right) D_{M_{l-1}}\left( v\right) \times \right.
\end{equation*}

\begin{equation*}
\left. \times \left[ f\left( \cdot -u,\cdot -v\right) -f\left( \cdot ,\cdot
\right) \right] d\mu \left( u,v\right) \right\Vert
_{p}=I_{1}+I_{2}+I_{3}+I_{4}.
\end{equation*}

From the generalized Minkowski inequality, and by (\ref{for1}) and (\ref%
{for4}) we obtain

\begin{equation}
I_{4}\leq \frac{1}{A_{n}^{-\alpha }A_{m}^{-\beta }}\int\limits_{G_{m}^{2}}%
\left\vert A_{n-M_{k-1}+1}^{-\alpha }A_{m-M_{l-1}+1}^{-\beta
}D_{M_{k-1}}\left( u\right) D_{M_{l-1}}\left( v\right) \right\vert \times
\label{for6}
\end{equation}

\begin{equation*}
\times \left\Vert f\left( \cdot -u,\cdot -v\right) -f\left( x,y\right)
\right\Vert _{p}d\mu \left( u,v\right)
\end{equation*}

\begin{equation*}
\leq c\left( \alpha ,\beta \right) M_{k-1}M_{l-1}\int\limits_{I_{k-1}\times
I_{l-1}}\left\Vert f\left( \cdot -u,\cdot -v\right) -f\left( \cdot ,\cdot
\right) \right\Vert _{p}d\mu \left( u,v\right)
\end{equation*}

\begin{equation*}
=O\left( \omega _{1}(f,1/M_{k-1})_{p}+\omega _{2}(f,1/M_{l-1})_{p}\right) .
\end{equation*}%
It is evident that

\begin{equation}
I_{1}\leq \frac{1}{A_{n}^{-\alpha }A_{m}^{-\beta }}\text{ }%
\sum\limits_{r=0}^{k-2}\sum\limits_{s=0}^{l-2}\left\Vert \text{ }%
\int\limits_{G_{m}^{2}}\sum\limits_{i=M_{r}}^{M_{r+1}-1}\sum%
\limits_{j=M_{s}}^{M_{s+1}-1}A_{n-i+1}^{-\alpha -1}A_{m-j+1}^{-\beta
-1}D_{i}\left( u\right) D_{j}\left( v\right) \times \right.  \label{for7}
\end{equation}

\begin{equation*}
\left. \times \left[ f\left( \cdot -u,\cdot -v\right) -f\left( \cdot ,\cdot
\right) \right] d\mu \left( u,v\right) \right\Vert _{p}
\end{equation*}

\begin{equation*}
\leq \frac{1}{A_{n}^{-\alpha }A_{m}^{-\beta }}\text{ }\sum%
\limits_{r=0}^{k-2}\sum\limits_{s=0}^{l-2}\left\Vert \text{ }%
\int\limits_{G_{m}^{2}}\sum\limits_{i=M_{r}}^{M_{r+1}-1}\sum%
\limits_{j=M_{s}}^{M_{s+1}-1}A_{n-i+1}^{-\alpha -1}A_{m-j+1}^{-\beta
-1}D_{i}\left( u\right) D_{j}\left( v\right) \times \right.
\end{equation*}

\begin{equation*}
\left. \times \left[ f\left( \cdot -u,\cdot -v\right) -S_{M_{r,}M_{s}}\left(
\cdot -u,\cdot -v,f\right) \right] d\mu \left( u,v\right) \right\Vert _{p}
\end{equation*}

\begin{equation*}
+\frac{1}{A_{n}^{-\alpha }A_{m}^{-\beta }}\text{ }\sum\limits_{r=0}^{k-2}%
\sum\limits_{s=0}^{l-2}\left\Vert \text{ }\int\limits_{G_{m}^{2}}\sum%
\limits_{i=M_{r}}^{M_{r+1}-1}\sum\limits_{j=M_{s}}^{M_{s+1}-1}A_{n-i+1}^{-%
\alpha -1}A_{m-j+1}^{-\beta -1}D_{i}\left( u\right) D_{j}\left( v\right)
\times \right.
\end{equation*}

\begin{equation*}
\left. \times \left[ S_{M_{r,}M_{s}}\left( \cdot -u,\cdot -v,f\right)
-S_{M_{r,}M_{s}}\left( \cdot ,\cdot ,f\right) \right] d\mu \left( u,v\right)
\right\Vert _{p}
\end{equation*}

\begin{equation*}
+\frac{1}{A_{n}^{-\alpha }A_{m}^{-\beta }}\text{ }\sum\limits_{r=0}^{k-2}%
\sum\limits_{s=0}^{l-2}\left\Vert \text{ }\int\limits_{G_{m}^{2}}\sum%
\limits_{i=M_{r}}^{M_{r+1}-1}\sum\limits_{j=M_{s}}^{M_{s+1}-1}A_{n-i+1}^{-%
\alpha -1}A_{m-j+1}^{-\beta -1}D_{i}\left( u\right) D_{j}\left( v\right)
\times \right.
\end{equation*}

\begin{equation*}
\left. \times \left[ S_{M_{r,}M_{s}}\left( \cdot ,\cdot ,f\right) -f\left(
\cdot ,\cdot \right) \right] d\mu \left( u,v\right) \right\Vert
_{p}=I_{11}+I_{12}+I_{13}.
\end{equation*}

It is easy to show that

\begin{equation}
I_{12}=0.  \label{for9}
\end{equation}

Using Lemma \ref{L1} for $I_{11}$we can write 
\begin{equation}
I_{11}\leq \frac{1}{A_{n}^{-\alpha }A_{m}^{-\beta }}\text{ }%
\sum\limits_{r=0}^{k-2}\sum\limits_{s=0}^{l-2}\text{ }\int%
\limits_{G_{m}^{2}}\left\vert
\sum\limits_{i=M_{r}}^{M_{r+1}-1}\sum%
\limits_{j=M_{s}}^{M_{s+1}-1}A_{n-i+1}^{-\alpha -1}A_{m-j+1}^{-\beta
-1}D_{i}\left( u\right) D_{j}\left( v\right) \right\vert \times
\label{for10}
\end{equation}

\begin{equation*}
\times \left\Vert f\left( \cdot -u,\cdot -v\right) -S_{M_{r,}M_{s}}\left(
\cdot -u,\cdot -v,f\right) \right\Vert _{p}d\mu \left( u,v\right)
\end{equation*}

\begin{equation*}
\leq c\left( \alpha ,\beta \right) n^{\alpha }m^{\beta }\text{ }%
\sum\limits_{r=0}^{k-2}\sum\limits_{s=0}^{l-2}\left( \omega
_{1}(f,1/M_{r})_{p}+\omega _{2}(f,1/M_{s})_{p}\right) \times
\end{equation*}

\begin{equation*}
\times \left( \int\limits_{G_{m}}\left\vert
\sum\limits_{i=M_{r}}^{M_{r+1}-1}A_{n-i+1}^{-\alpha -1}D_{i}\left( u\right)
\right\vert d\mu \left( u\right) \right) \left(
\int\limits_{G_{m}}\left\vert
\sum\limits_{j=M_{s}}^{M_{s+1}-1}A_{m-j+1}^{-\beta -1}D_{j}\left( v\right)
\right\vert d\mu \left( v\right) \right)
\end{equation*}

\begin{equation*}
\leq c\left( \alpha ,\beta \right) n^{\alpha }m^{\beta }\text{ }%
\sum\limits_{r=0}^{k-2}\sum\limits_{s=0}^{l-2}\left( \omega
_{1}(f,1/M_{r})_{p}+\omega _{2}(f,1/M_{s})_{p}\right) \times
\end{equation*}

\begin{equation*}
\times \left( \sqrt{M_{r+1}}\left( \sum\limits_{i=M_{r}}^{M_{r+1}-1}\left(
n-i+1\right) ^{-2\alpha -2}\right) ^{1/2}\right) \times
\end{equation*}

\begin{equation*}
\times \left( \sqrt{M_{s+1}}\left( \sum\limits_{j=M_{s}}^{M_{s+1}-1}\left(
m-j+1\right) ^{-2\beta -2}\right) ^{1/2}\right)
\end{equation*}

\begin{equation*}
\leq c\left( \alpha ,\beta \right) n^{\alpha }m^{\beta
}\sum\limits_{r=0}^{k-2}\sum\limits_{s=0}^{l-2}\left( \omega
_{1}(f,1/M_{r})_{p}+\omega _{2}(f,1/M_{s})_{p}\right) \times
\end{equation*}

\begin{equation*}
\times \left( \sqrt{M_{r+1}}\left( n-M_{r+1}\right) ^{-\alpha -1}\sqrt{%
M_{r+1}}\right) \left( \sqrt{M_{s+1}}\left( n-M_{s+1}\right) ^{-\beta -1}%
\sqrt{M_{s+1}}\right)
\end{equation*}

\begin{equation*}
\leq c\left( \alpha ,\beta \right) n^{\alpha }m^{\beta
}\sum\limits_{r=0}^{k-2}\sum\limits_{s=0}^{l-2}\frac{M_{r+1}}{M_{k}^{\alpha
+1}}\frac{M_{s+1}}{M_{l}^{\beta +1}}\left( \omega _{1}(f,1/M_{r})_{p}+\omega
_{2}(f,1/M_{s})_{p}\right)
\end{equation*}

\begin{equation*}
\leq c\left( \alpha ,\beta \right) \left( \sum\limits_{r=0}^{k-2}\frac{M_{r}%
}{M_{k}}\omega _{1}\left( f,1/M_{r}\right) _{p}+\sum\limits_{s=0}^{l-2}\frac{%
M_{s}}{M_{l}}\omega _{2}\left( f,1/M_{s}\right) _{p}\right) .
\end{equation*}

\bigskip Analogously, we can prove that%
\begin{equation}
I_{13}\leq c\left( \alpha ,\beta \right) \left( \sum\limits_{r=0}^{k-2}\frac{%
M_{r}}{M_{k}}\omega _{1}\left( f,1/M_{r}\right) _{p}+\sum\limits_{s=0}^{l-2}%
\frac{M_{s}}{M_{l}}\omega _{2}\left( f,1/M_{s}\right) _{p}\right) .
\label{for11}
\end{equation}

Combining (\ref{for7})-(\ref{for11}) for $I_{1}$we recive that

\begin{equation}
I_{1}\leq c\left( \alpha ,\beta \right) \left( \sum\limits_{r=0}^{k-2}\frac{%
M_{r}}{M_{k}}\omega _{1}\left( f,1/M_{r}\right) _{p}+\sum\limits_{s=0}^{l-2}%
\frac{M_{s}}{M_{l}}\omega _{2}\left( f,1/M_{s}\right) _{p}\right) .
\end{equation}

For $I_{2}$ we can write%
\begin{equation}
I_{2}\leq \frac{1}{A_{n}^{-\alpha }A_{m}^{-\beta }}\left\Vert \text{ }%
\int\limits_{G_{m}^{2}}A_{m-M_{l-1}+1}^{-\beta }D_{M_{l-1}}\left( v\right)
\sum\limits_{i=1}^{M_{k-1}-1}A_{n-i+1}^{-\alpha -1}D_{i}\left( u\right)
\times \right.  \label{for12.1}
\end{equation}

\begin{equation*}
\left. \times \left[ f\left( \cdot -u,\cdot -v\right) -f\left( \cdot
-u,\cdot \right) \right] d\mu \left( u,v\right) \right\Vert _{p}
\end{equation*}

\begin{equation*}
+\frac{1}{A_{n}^{-\alpha }A_{m}^{-\beta }}\left\Vert \text{ }%
\int\limits_{G_{m}^{2}}A_{m-M_{l-1}+1}^{-\beta }D_{M_{l-1}}\left( v\right)
\sum\limits_{i=1}^{M_{k-1}-1}A_{n-i+1}^{-\alpha -1}D_{i}\left( u\right)
\times \right.
\end{equation*}

\begin{equation*}
\left. \times \left[ f\left( \cdot -u,\cdot \right) -f\left( \cdot ,\cdot
\right) \right] d\mu \left( u,v\right) \right\Vert _{p}=I_{21}+I_{22}.
\end{equation*}

From the generalized Minkowski inequality, and by (\ref{for1}) and (\ref%
{for4}) we obtain

\begin{equation}
I_{21}\leq c\left( \alpha ,\beta \right) \frac{M_{l-1}}{A_{n}^{-\alpha }}%
\int\limits_{I_{l-1}}\left( \int\limits_{G_{m}}\left\vert
\sum\limits_{i=1}^{M_{k-1}-1}A_{n-i+1}^{-\alpha -1}D_{i}\left( u\right)
\right\vert \right. \times  \label{for12.2}
\end{equation}

\begin{equation*}
\times \left. \left\Vert f\left( \cdot -u,\cdot -v\right) -f\left( \cdot
-u,\cdot \right) \right\Vert _{p}d\mu \left( u\right) \right) d\mu \left(
v\right)
\end{equation*}

\begin{equation*}
\leq c\left( \alpha ,\beta \right) n^{\alpha }\omega _{2}\left(
f,1/M_{l-1}\right) \left( \int\limits_{G_{m}}\left\vert
\sum\limits_{i=1}^{M_{k-1}-1}A_{n-i+1}^{-\alpha -1}D_{i}\left( u\right)
\right\vert d\mu \left( u\right) \right)
\end{equation*}

\begin{equation*}
\leq c\left( \alpha ,\beta \right) n^{\alpha }\omega _{2}\left(
f,1/M_{l-1}\right) \left( \sqrt{M_{k-1}}\left(
\sum\limits_{i=1}^{M_{k-1}-1}\left( n-i+1\right) ^{-2\alpha -2}\right)
^{1/2}\right)
\end{equation*}

\begin{equation*}
\leq c\left( \alpha ,\beta \right) n^{\alpha }\omega _{2}\left(
f,1/M_{l-1}\right) \left( \sqrt{M_{k-1}}\left( n-M_{k-1}\right) ^{-\alpha -1}%
\sqrt{M_{k-1}}\right)
\end{equation*}

\begin{equation*}
\leq c\left( \alpha ,\beta \right) \omega _{2}\left( f,1/M_{l-1}\right) .
\end{equation*}

The estimation of $I_{22}$ is analogous to the estimation of $I_{1}$ and we
have

\begin{equation}
I_{22}\leq c\left( \alpha ,\beta \right) \sum\limits_{r=0}^{k-2}\frac{M_{r}}{%
M_{k}}\omega _{1}\left( f,1/M_{r}\right) _{p}.  \label{for13}
\end{equation}

So, combining (\ref{for12.1})-(\ref{for13}) \ for $I_{2}$ we \ have

\begin{equation}
I_{2}\leq c\left( \alpha ,\beta \right) \left( \sum\limits_{r=0}^{k-2}\frac{%
M_{r}}{M_{k}}\omega _{1}\left( f,1/M_{r}\right) _{p}+\omega _{2}\left(
f,1/M_{l-1}\right) \right) .  \label{for13.1}
\end{equation}

The estimation $I_{3}$ is analogous to the estimation of $I_{2}$ and we have

\begin{equation}
I_{3}\leq c\left( \alpha ,\beta \right) \left( \sum\limits_{s=0}^{l-2}\frac{%
M_{s}}{M_{l}}\omega _{2}\left( f,1/M_{s}\right) _{p}+\omega _{1}\left(
f,1/M_{k-1}\right) \right) .  \label{for14}
\end{equation}

Combining (\ref{for5})-(\ref{for6}), (\ref{for11}), (\ref{for13.1})-(\ref%
{for14}) we receive the proof of Lemma \ref{L2}.\bigskip
\end{proof}

\begin{lemma}
\label{L3}Let $f$ $\in L^{p}(G_{m}^2)$ for some $p$ $\in $ $\left[ 1,\infty %
\right] .$ Then for every $\alpha ,\beta $ $\in $ $\left( 0,1\right) $ the
following estimations holds%
\begin{equation*}
II:=\frac{1}{A_{n}^{-\alpha }A_{m}^{-\beta }}\left\Vert \text{ }%
\int\limits_{G_{m}^{2}}\sum\limits_{i=M_{k-1}}^{n}\sum%
\limits_{j=0}^{M_{l-1}-1}A_{n-i}^{-\alpha }A_{m-j}^{-\beta }\psi _{i}\left(
u\right) \psi _{j}\left( v\right) \times \right.
\end{equation*}

\begin{equation*}
\left. \times \left[ f\left( \cdot -u,\cdot -v\right) -f\left( \cdot ,\cdot
\right) \right] d\mu \left( u\right) d\mu \left( v\right) \right\Vert
_{p}\leq c\left( \alpha ,\beta \right) \omega _{1}\left( f,1/M_{k-1}\right)
_{p}M_{k}^{\alpha },
\end{equation*}

\begin{equation*}
III:=\frac{1}{A_{n}^{-\alpha }A_{m}^{-\beta }}\left\Vert \text{ }%
\int\limits_{G_{m}^{2}}\sum\limits_{i=0}^{M_{k-1}-1}\sum%
\limits_{j=M_{l-1}}^{m}A_{n-i}^{-\alpha }A_{m-j}^{-\beta }\psi _{i}\left(
u\right) \psi _{j}\left( v\right) \times \right.
\end{equation*}

\begin{equation*}
\left. \times \left[ f\left( \cdot -u,\cdot -v\right) -f\left( \cdot ,\cdot
\right) \right] d\mu \left( u\right) d\mu \left( v\right) \right\Vert
_{p}\leq c\left( \alpha ,\beta \right) \omega _{2}\left( f,1/M_{l-1}\right)
_{p}M_{l}^{\beta }.
\end{equation*}

where $M_{k}\leq n<M_{k+1},M_{l}\leq m<M_{l+1}.$\newline
\end{lemma}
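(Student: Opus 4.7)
The plan is to reduce the operator $II$ to a composition of one-dimensional Vilenkin multipliers and then estimate each factor using Theorem T and Lemma \ref{L1}. Writing $K(u):=\sum_{i=M_{k-1}}^{n}A_{n-i}^{-\alpha}\psi_{i}(u)$ and $L(v):=\sum_{j=0}^{M_{l-1}-1}A_{m-j}^{-\beta}\psi_{j}(v)$, I first decompose
\[
f(x-u,y-v)-f(x,y)=[f(x-u,y-v)-f(x,y-v)]+[f(x,y-v)-f(x,y)].
\]
Since every index $i$ appearing in $K$ satisfies $i\geq M_{k-1}\geq 1$, the identity $\int_{G_{m}}\psi_{i}\,d\mu=0$ annihilates the second bracket under the $u$-integration, and the same identity lets me replace the first bracket simply by $f(x-u,y-v)$. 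Expanding in Fourier series (so that the resulting multiplier factors in $i$ and $j$) and invoking the commutativity of the one-dimensional Fourier multipliers yields the operator identity
\[
II=\bigl\|S_{M_{l-1}}^{(2)}\sigma_{m}^{-\beta,(2)}\,\sigma_{n}^{-\alpha,(1)}(f-S_{M_{k-1}}^{(1)}f)\bigr\|_{p},
\]
where the superscript $(1)$ (resp.\ $(2)$) indicates that the one-dimensional operator acts on the first (resp.\ second) variable only.

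Three one-dimensional estimates then finish the proof. \emph{(a)} The convolution kernel of $S_{M_{l-1}}^{(2)}\sigma_{m}^{-\beta,(2)}$ is $L(v)/A_{m}^{-\beta}$. Abel's transformation (\ref{for3}) gives
\[
L(v)=A_{m-M_{l-1}+1}^{-\beta}D_{M_{l-1}}(v)+\sum_{j=1}^{M_{l-1}-1}A_{m-j+1}^{-\beta-1}D_{j}(v);
\]
by (\ref{for1}) and (\ref{for4}) the boundary term contributes at most $cm^{-\beta}$, while Lemma \ref{L1} combined with (\ref{for4}) bounds the Dirichlet sum by $c\sqrt{M_{l-1}}\cdot\sqrt{M_{l-1}}\,m^{-\beta-1}\leq cm^{-\beta}$. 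Division by $|A_{m}^{-\beta}|\sim m^{-\beta}$ then produces $\|S_{M_{l-1}}^{(2)}\sigma_{m}^{-\beta,(2)}\|_{p\to p}\leq c(\beta)$, uniformly in $p\in[1,\infty]$. \emph{(b)} Feeding the trivial bound $\omega(g,h)_{p}\leq 2\|g\|_{p}$ and the estimate $\sum_{r=0}^{k-2}M_{r}/M_{k}\leq c$ into Theorem T yields the Lebesgue-type inequality $\|\sigma_{n}^{-\alpha}g-g\|_{p}\leq cM_{k}^{\alpha}\|g\|_{p}$ in one dimension, whence $\|\sigma_{n}^{-\alpha,(1)}\|_{p\to p}\leq cM_{k}^{\alpha}$ on $L^{p}(G_{m}^{2})$ by Fubini. \emph{(c)} The standard approximation estimate for Vilenkin partial sums gives
\[
\|f-S_{M_{k-1}}^{(1)}f\|_{p}\leq\omega_{1}(f,1/M_{k-1})_{p}.
\]

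Chaining (a)--(c) yields $II\leq c(\alpha,\beta)\,M_{k}^{\alpha}\omega_{1}(f,1/M_{k-1})_{p}$, which is the first inequality of Lemma \ref{L3}. The estimate for $III$ follows by the completely symmetric argument: one interchanges the roles of $u,v$, of $i,j$, and of $(\alpha,k)$ with $(\beta,l)$, decomposing instead $f(x-u,y-v)-f(x,y)=[f(x-u,y-v)-f(x-u,y)]+[f(x-u,y)-f(x,y)]$, whose second bracket is killed by the vanishing mean of $\psi_{j}$ for $j\geq M_{l-1}\geq 1$. The decisive point is step (b): its availability rests on the observation that the residual sum $\sum_{r=0}^{k-2}(M_{r}/M_{k})\omega(f,1/M_{r})_{p}$ in Theorem T is dominated by $c\|f\|_{p}$ when the trivial bound for $\omega$ is used, so that only the clean $cM_{k}^{\alpha}\|g\|_{p}$ term survives, which is exactly what is required to close the estimate.
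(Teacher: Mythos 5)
Your argument is correct and, at its core, runs parallel to the paper's: both proofs first discard the $f(\cdot,\cdot)$ term using $\int_{G_m}\psi_i\,d\mu=0$ for $i\geq M_{k-1}\geq 1$, both insert the harmless correction $-S^{(1)}_{M_{k-1}}f$ because the multiplier vanishes for first-variable frequencies $i<M_{k-1}$, and both control the second-variable factor exactly as in your step (a) (Abel's transformation via (\ref{for3}), the boundary Dirichlet term via (\ref{for1}), Lemma \ref{L1} for the remaining Dirichlet sum, giving an $L^1$ kernel norm $O(m^{-\beta})$ that cancels $1/A_m^{-\beta}\sim m^{\beta}$). The genuine divergence is in the first variable: the paper splits the range $M_{k-1}\leq i\leq n$ into $[M_{k-1},M_k)$ and $[M_k,n]$ and invokes the kernel inequality (\ref{for22.1}) from \cite{Te}, namely $\int_{G_m}\bigl\vert\sum_{i=M_{k-1}}^{M_k-1}A_{n-i}^{-\alpha}\psi_i(u)\bigr\vert\,d\mu(u)\leq c(\alpha)$, to obtain the $L^1$ bound $c(\alpha)n^{\alpha}$ for the normalized first-variable kernel, whereas you extract the operator bound $\Vert\sigma_n^{-\alpha}\Vert_{p\to p}\leq cM_k^{\alpha}$ from the statement of Theorem T by feeding in $\omega(g,\cdot)_p\leq 2\Vert g\Vert_p$ and $\sum_{r=0}^{k-2}M_r/M_k\leq c$. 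Your route is legitimate and has the advantage of quoting only the advertised one-dimensional theorem rather than an auxiliary kernel estimate from its proof; the small price is that you import the constant $c(p,\alpha)$ of Theorem T, a cosmetic discrepancy with the claimed $p$-free constant that the paper's own statements already exhibit. Your treatment of $III$ by symmetry matches the paper's.
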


\begin{proof}[Proof of Lemma \protect\ref{L3}]
From the generalized Minkowski inequality we have

\begin{equation}
II=\frac{1}{A_{n}^{-\alpha }A_{m}^{-\beta }}\left\Vert \text{ }%
\int\limits_{G_{m}^{2}}\sum\limits_{i=M_{k-1}}^{n}\sum%
\limits_{j=0}^{M_{l-1}-1}A_{n-i}^{-\alpha }A_{m-j}^{-\beta }\psi _{i}\left(
u\right) \psi _{j}\left( v\right) \times \right.  \label{for21}
\end{equation}

\begin{equation*}
\left. \times f\left( \cdot -u,\cdot -v\right) d\mu \left( u,v\right)
\right\Vert _{p}
\end{equation*}

\begin{equation*}
=\frac{1}{A_{n}^{-\alpha }A_{m}^{-\beta }}\left\Vert \text{ }%
\int\limits_{G_{m}^{2}}\sum\limits_{i=M_{k-1}}^{n}\sum%
\limits_{j=0}^{M_{l-1}-1}A_{n-i}^{-\alpha }A_{m-j}^{-\beta }\psi _{i}\left(
u\right) \psi _{j}\left( v\right) \times \right.
\end{equation*}

\begin{equation*}
\left. \times \left[ f\left( \cdot -u,\cdot -v\right) -S_{M_{k-1}}^{\left(
1\right) }\left( \cdot -u,\cdot -v,f\right) \right] d\mu \left( u,v\right)
\right\Vert _{p}
\end{equation*}

\begin{equation*}
\leq \frac{1}{A_{n}^{-\alpha }A_{m}^{-\beta }}\int\limits_{G_{m}^{2}}\left%
\vert
\sum\limits_{i=M_{k-1}}^{M_{k}-1}\sum\limits_{j=0}^{M_{l-1}-1}A_{n-i}^{-%
\alpha }A_{m-j}^{-\beta }\psi _{i}\left( u\right) \psi _{j}\left( v\right)
\right\vert \times
\end{equation*}

\begin{equation*}
\times \left\Vert f\left( \cdot -u,\cdot -v\right) -S_{M_{k-1}}^{\left(
1\right) }\left( \cdot -u,\cdot -v,f\right) \right\Vert _{p}d\mu \left(
u,v\right)
\end{equation*}

\begin{equation*}
+\frac{1}{A_{n}^{-\alpha }A_{m}^{-\beta }}\int\limits_{G_{m}^{2}}\left\vert
\sum\limits_{i=M_{k}}^{n}\sum\limits_{j=0}^{M_{l-1}-1}A_{n-i}^{-\alpha
}A_{m-j}^{-\beta }\psi _{i}\left( u\right) \psi _{j}\left( v\right)
\right\vert \text{ }\times
\end{equation*}

\begin{equation*}
\times \left. \left\Vert f\left( \cdot -u,\cdot -v\right)
-S_{M_{k-1}}^{\left( 1\right) }\left( \cdot -u,\cdot -v,f\right) \right\Vert
_{p}d\mu \left( u\right) \right) d\mu \left( v\right) =II_{1}+II_{2}.
\end{equation*}%
In \cite{Te} present author showed that the inequality

\begin{equation}
\int\limits_{G_{m}}\left\vert
\sum\limits_{v=M_{k-1}}^{M_{k}-1}A_{n-v}^{-\alpha }\psi _{v}(u)\right\vert
d\mu (u)\leq c\left( \alpha \right) ,\ \ \ \left( k=1,2...\right)
\label{for22.1}
\end{equation}%
holds true.

Using Lemma \ref{L1}, by (\ref{for4}) and (\ref{for22.1}) for $II_{1}$ we
can write

\begin{equation}
II_{1}\leq c\left( \alpha ,\beta \right) n^{\alpha }m^{\beta }\omega
_{1}\left( f,1/M_{k-1}\right) _{p}\left( \int\limits_{G_{m}}\left\vert
\sum\limits_{i=M_{k-1}}^{M_{k}-1}A_{n-i}^{-\alpha }\psi _{i}\left( u\right)
\right\vert d\mu \left( u\right) \right) \times  \label{for23}
\end{equation}

\begin{equation*}
\times \left( \int\limits_{G_{m}}\left\vert
\sum\limits_{j=1}^{M_{l-1}}A_{m-j+1}^{-\beta }\psi _{j-1}\left( v\right)
\right\vert d\mu \left( v\right) \right)
\end{equation*}

\begin{equation*}
\leq c\left( \alpha ,\beta \right) n^{\alpha }m^{\beta }\omega _{1}\left(
f,1/M_{k-1}\right) _{p}\left( \sqrt{M_{l-1}}\left(
\sum\limits_{i=1}^{M_{l-1}}\left( m-j+1\right) ^{-2\beta -2}\right)
^{1/2}\right)
\end{equation*}

\begin{equation*}
\leq c\left( \alpha ,\beta \right) n^{\alpha }m^{\beta }\omega _{2}\left(
f,1/M_{k-1}\right) _{p}\left( \sqrt{M_{l-1}}\left( n-M_{l-1}\right) ^{-\beta
-1}\sqrt{M_{l-1}}\right)
\end{equation*}

\begin{equation*}
\leq c\left( \alpha ,\beta \right) \omega _{1}\left( f,1/M_{k-1}\right)
_{p}M_{k}^{\alpha }.
\end{equation*}%
The estimation of $II_{2}$ is analogous to the estimation of $II_{1}$ and we
have

\begin{equation}
II_{2}\leq c\left( \alpha ,\beta \right) \omega _{1}\left(
f,1/M_{k-1}\right) _{p}M_{k}^{\alpha }.  \label{for24}
\end{equation}

Combining (\ref{for21})-(\ref{for24}) we have%
\begin{equation}
II\leq c\left( \alpha ,\beta \right) \omega _{1}\left( f,1/M_{k-1}\right)
_{p}M_{k}^{\alpha }.  \label{for26}
\end{equation}

Analogously, we can prove that

\begin{equation}
III \label{for27.1}\leq c\left( \alpha ,\beta \right) \omega _{2}\left( f,1/M_{l-1}\right)
_{p}M_{l}^{\beta }.
\end{equation}

Combining (\ref{for26})-(\ref{for27.1}) we receive the proof of Lemma \ref%
{L3}.\bigskip
\end{proof}

\begin{lemma}
\label{L4}Let $f$ $\in L^{p}(G_{m}^2)$ for some $p$ $\in $ $\left[ 1,\infty %
\right] .$ Then for every $\alpha ,\beta $ $\in $ $\left( 0,1\right) $ the
following estimations holds
\end{lemma}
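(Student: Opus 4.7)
The plan is to establish that
\[
IV := \frac{1}{A_{n}^{-\alpha}A_{m}^{-\beta}}\left\Vert \int_{G_{m}^{2}}\sum_{i=M_{k-1}}^{n}\sum_{j=M_{l-1}}^{m}A_{n-i}^{-\alpha}A_{m-j}^{-\beta}\psi_{i}(u)\psi_{j}(v)\bigl[f(\cdot-u,\cdot-v)-f(\cdot,\cdot)\bigr]d\mu(u,v)\right\Vert_{p}
\]
is bounded by $c(\alpha,\beta)\,\omega_{1,2}(f,1/M_{k-1},1/M_{l-1})_{p}\,M_{k}^{\alpha}M_{l}^{\beta}$. Together with Lemmas~\ref{L2} and~\ref{L3}, this would account for the remaining corner block of the double sum defining $\sigma_{n,m}^{-\alpha,-\beta}(f)-f$ and complete the proof of Theorem~\ref{T3}.

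First I would perform an orthogonal reduction. Since the trigonometric polynomial in $(u,v)$ inside the integral uses only Vilenkin characters with $i\geq M_{k-1}$ and $j\geq M_{l-1}$, integration in $u$ annihilates any component of the form $\psi_{a}$ with $a<M_{k-1}$, and analogously in $v$. Hence $f(\cdot-u,\cdot-v)$ may be replaced under the integral by the mixed residue
\[
R(x,y,u,v) := f(x-u,y-v) - S^{(1)}_{M_{k-1}}(x-u,y-v,f) - S^{(2)}_{M_{l-1}}(x-u,y-v,f) + S_{M_{k-1},M_{l-1}}(x-u,y-v,f).
\]
Using the identity $S_{M_{n}}g(x)=M_{n}\int_{I_{n}}g(x-s)\,d\mu(s)$ in each variable and the generalized Minkowski inequality, one checks $\|R(\cdot,\cdot,u,v)\|_{p}\leq \omega_{1,2}(f,1/M_{k-1},1/M_{l-1})_{p}$ uniformly in $(u,v)$.

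Next I would split both outer sums at the scales $M_{k}$ and $M_{l}$, writing $\sum_{i=M_{k-1}}^{n}=\sum_{i=M_{k-1}}^{M_{k}-1}+\sum_{i=M_{k}}^{n}$ and similarly for $j$, producing four product-type blocks. For the inner block $[M_{k-1},M_{k})\times[M_{l-1},M_{l})$ I would apply the generalized Minkowski inequality and the one-dimensional estimate (\ref{for22.1}) in each variable separately, picking up two absolute constants; combining with $1/(A_{n}^{-\alpha}A_{m}^{-\beta})\sim n^{\alpha}m^{\beta}\sim M_{k}^{\alpha}M_{l}^{\beta}$ from (\ref{for4}) and the $L^{p}$ bound on $R$ gives exactly $c(\alpha,\beta)\omega_{1,2}M_{k}^{\alpha}M_{l}^{\beta}$. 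For the three remaining blocks (where $i\geq M_{k}$, or $j\geq M_{l}$, or both), I would apply Abel's transformation in the outer variable to convert $\psi$'s into $D$'s and then use Lemma~\ref{L1}, following the telescoping scheme already carried out for $I_{11}$ in Lemma~\ref{L2}; the resulting sums $\sqrt{M_{r+1}}\,(n-M_{r+1})^{-\alpha-1}\sqrt{M_{r+1}}$ form a geometric series whose total is absorbed into the constant.

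The main obstacle I anticipate is the clean justification of the replacement $f\mapsto R$: the orthogonality argument is elementary but must be set up so that both low-frequency directions are removed simultaneously, and the corresponding $L^{p}$-bound $\|R\|_{p}\leq \omega_{1,2}$ must be derived without losing any powers of $M_{k}$ or $M_{l}$. Once this reduction is in place, the block-by-block estimates proceed along tracks already laid in Lemmas~\ref{L2} and~\ref{L3}, and summing the four block bounds produces $IV\leq c(\alpha,\beta)\,\omega_{1,2}(f,1/M_{k-1},1/M_{l-1})_{p}\,M_{k}^{\alpha}M_{l}^{\beta}$, which is the last ingredient needed to assemble Theorem~\ref{T3}.
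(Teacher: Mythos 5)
Your proposal follows essentially the same route as the paper: the paper likewise replaces $f(\cdot-u,\cdot-v)$ by the mixed residue $S_{M_{k-1},M_{l-1}}-S^{(1)}_{M_{k-1}}-S^{(2)}_{M_{l-1}}+f$ via orthogonality, applies the generalized Minkowski inequality with $\Vert R\Vert_{p}\leq\omega_{1,2}(f,1/M_{k-1},1/M_{l-1})_{p}$, and then uses the $L^{1}$-boundedness of the product kernel $\bigl(\sum_{i=M_{k-1}}^{n}A_{n-i}^{-\alpha}\psi_{i}\bigr)\bigl(\sum_{j=M_{l-1}}^{m}A_{m-j}^{-\beta}\psi_{j}\bigr)$ together with (\ref{for4}). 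The only cosmetic difference is that the paper asserts the kernel bound directly (relying on the one-dimensional estimate from \cite{Te}), whereas you derive it by the block-splitting and Abel/Lemma~\ref{L1} scheme already used for $I_{11}$ and $II_{1}$; this is a matter of detail, not of method.
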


\begin{equation*}
IV:=\frac{1}{A_{n}^{-\alpha }A_{m}^{-\beta }}\left\Vert \text{ }%
\int\limits_{G_{m}^{2}}\sum\limits_{i=M_{k-1}}^{n}\sum%
\limits_{j=M_{l-1}}^{m}A_{n-i}^{-\alpha }A_{m-j}^{-\beta }\psi _{i}\left(
u\right) \psi _{j}\left( v\right) \times \right.
\end{equation*}

\begin{equation*}
\left. \times \left[ f\left( \cdot -u,\cdot -v\right) -f\left( \cdot ,\cdot
\right) \right] d\mu \left( u\right) d\mu \left( v\right) \right\Vert
_{p}\leq c\left( \alpha ,\beta \right) \omega _{1,2}\left(
f,1/M_{k},1/M_{l}\right) _{p}M_{k}^{\alpha }M_{l}^{\beta },
\end{equation*}

where $M_{k}\leq n<M_{k+1},M_{l}\leq m<M_{l+1}.$\newline

\begin{proof}[Proof of Lemma \protect\ref{L4}]
From the generalized Minkowski inequality, and by (\ref{for1}) and (\ref%
{for4}) we obtain

\begin{equation}
IV=\frac{1}{A_{n}^{-\alpha }A_{m}^{-\beta }}\left\Vert \text{ }%
\int\limits_{G_{m}^{2}}\sum\limits_{i=M_{k-1}}^{n}\sum%
\limits_{j=M_{l-1}}^{m}A_{n-i}^{-\alpha }A_{m-j}^{-\beta }\psi _{i}\left(
u\right) \psi _{j}\left( v\right) \times \right.  \label{for15}
\end{equation}

\begin{equation*}
\left. \times f\left( \cdot -u,\cdot -v\right) d\mu \left( u,v\right)
\right\Vert _{p}
\end{equation*}

\begin{equation*}
\leq \frac{1}{A_{n}^{-\alpha }A_{m}^{-\beta }}\left\Vert \text{ }%
\int\limits_{G_{m}^{2}}\sum\limits_{i=M_{k-1}}^{n}\sum%
\limits_{j=M_{l-1}}^{m}A_{n-i}^{-\alpha }A_{m-j}^{-\beta }\psi _{i}\left(
u\right) \psi _{j}\left( v\right) \times \right.
\end{equation*}

\begin{equation*}
\times \left[ S_{M_{k-1}{},M_{l-1}}\left( \cdot -u,\cdot -v,f\right)
-S_{M_{k-1}}^{\left( 1\right) }\left( \cdot -u,\cdot -v,f\right) \right.
\end{equation*}

\begin{equation*}
\left. \left. -S_{M_{l-1}}^{\left( 2\right) }\left( \cdot -u,\cdot
-v,f\right) +f\left( \cdot -u,\cdot -v\right) \right] d\mu \left( u,v\right)
\right\Vert _{p}
\end{equation*}

\begin{equation*}
\leq \frac{1}{A_{n}^{-\alpha }A_{m}^{-\beta }}\int\limits_{G_{m}^{2}}\left%
\vert \sum\limits_{i=M_{k-1}}^{n}\sum\limits_{j=M_{l-1}}^{m}A_{n-i}^{-\alpha
}A_{m-j}^{-\beta }\psi _{i}\left( u\right) \psi _{j}\left( v\right)
\right\vert \times
\end{equation*}

\begin{equation*}
\times \left\Vert S_{M_{k-1}{},M_{l-1}}\left( \cdot -u,\cdot -v,f\right)
-S_{M_{k-1}}^{\left( 1\right) }\left( \cdot -u,\cdot -v,f\right) \right.
\end{equation*}

\begin{equation*}
\left. -S_{M_{l-1}}^{\left( 2\right) }\left( \cdot -u,\cdot -v,f\right)
+f\left( \cdot -u,\cdot -v\right) \right\Vert _{p}d\mu \left( u,v\right)
\end{equation*}

\begin{equation*}
\leq c\left( \alpha ,\beta \right) n^{\alpha }m^{\beta }\omega _{1,2}\left(
f,1/M_{k-1},1/M_{l-1}\right) _{p}
\end{equation*}

\begin{equation*}
\times \int\limits_{G_{m}^{2}}\left\vert \text{ }\sum\limits_{i=M_{k-1}}^{n}%
\sum\limits_{j=M_{l-1}}^{m}A_{n-i}^{-\alpha }A_{m-j}^{-\beta }\psi
_{i}\left( u\right) \psi _{j}\left( v\right) \right\vert d\mu \left(
u,v\right)
\end{equation*}

\begin{equation*}
\leq c\left( \alpha ,\beta \right) n^{\alpha }m^{\beta }\omega _{1,2}\left(
f,1/M_{k-1},1/M_{l-1}\right) _{p}
\end{equation*}

\begin{equation*}
\leq c\left( \alpha ,\beta \right) M_{k}^{\alpha }M_{l}^{\beta }\omega
_{1,2}\left( f,1/M_{k-1},1/M_{l-1}\right) _{p}.
\end{equation*}

Lemma \ref{L4} is proved.
\end{proof}

\begin{proof}[Proof of Theorem \protect\ref{T3}]
It is evident that

\begin{equation*}
\sigma _{n,m}^{-\alpha ,-\beta }\left( f,x,y\right) -f\left( x,y\right) =%
\frac{1}{A_{n}^{-\alpha }A_{m}^{-\beta }}\int\limits_{G_{m}^{2}}\sum%
\limits_{i=0}^{M_{k-1}-1}\sum\limits_{j=0}^{M_{l-1}-1}A_{n-i}^{-\alpha
}A_{m-j}^{-\beta }\psi _{i}\left( u\right) \psi _{j}\left( v\right) \times
\end{equation*}

\begin{equation*}
\times \left[ f\left( \cdot -u,\cdot -v\right) -f\left( \cdot,\cdot \right) \right] d\mu \left(
u,v\right)
\end{equation*}

\begin{equation*}
+\frac{1}{A_{n}^{-\alpha }A_{m}^{-\beta }}\text{ }\int\limits_{G_{m}^{2}}%
\sum\limits_{i=M_{k-1}}^{n}\sum\limits_{j=0}^{M_{l-1}-1}A_{n-i}^{-\alpha
}A_{m-j}^{-\beta }\psi _{i}\left( u\right) \psi _{j}\left( v\right) \times
\end{equation*}

\begin{equation*}
\times \left[ f\left( \cdot -u,\cdot -v\right) -f\left( \cdot ,\cdot \right) \right] d\mu \left(
u,v\right)
\end{equation*}

\begin{equation*}
+\frac{1}{A_{n}^{-\alpha }A_{m}^{-\beta }}\int\limits_{G_{m}^{2}}\sum%
\limits_{i=0}^{M_{k-1}-1}\sum\limits_{j=M_{l-1}}^{m}A_{n-i}^{-\alpha
}A_{m-j}^{-\beta }\psi _{i}\left( u\right) \psi _{j}\left( v\right) \times
\end{equation*}

\begin{equation*}
\times \left[ f\left( \cdot -u,\cdot -v\right) -f\left( \cdot ,\cdot \right) \right] d\mu \left(
u,v\right)
\end{equation*}

\begin{equation*}
+\frac{1}{A_{n}^{-\alpha }A_{m}^{-\beta }}\text{ }\int\limits_{G_{m}^{2}}%
\sum\limits_{i=M_{k-1}}^{n}\sum\limits_{j=M_{l-1}}^{m}A_{n-i}^{-\alpha
}A_{m-j}^{-\beta }\psi _{i}\left( u\right) \psi _{j}\left( v\right) \times
\end{equation*}

\begin{equation*}
\times \left[ f\left( \cdot -u,\cdot -v\right) -f\left(\cdot ,\cdot \right) \right] d\mu \left(
u\right) d\mu \left( v\right) =I+II+III+IV.
\end{equation*}%
Since%
\begin{equation*}
\left\Vert \sigma _{n,m}^{-\alpha ,-\beta }\left( f,x\right) -f\left(
x\right) \right\Vert _{p}\leq \left\Vert I\right\Vert _{p}+\left\Vert
II\right\Vert _{p}+\left\Vert III\right\Vert _{p}+\left\Vert IV\right\Vert
_{p}
\end{equation*}

From Lemmas \ref{L2}-\ref{L4} the proof of theorem is complete.
\end{proof}

\begin{proof}[Proof of Corollary \protect\ref{C2}]
Since

\begin{equation*}
\omega _{i}\left( f,\frac{1}{M_{n}}\right) \leq \omega \left( f,\frac{1}{%
M_{n}}\right) ,\text{ \ }\ i=1,2,  
\end{equation*}

\begin{equation*}
\omega _{1,2}\left( f,\frac{1}{M_{n}},\frac{1}{M_{m}}\right) \leq 2\omega
_{1}\left( f,\frac{1}{M_{n}}\right)
\end{equation*}

and%
\begin{equation*}
\omega _{1,2}\left( f,\frac{1}{M_{n}},\frac{1}{M_{m}}\right) \leq 2\omega
_{2}\left( f,\frac{1}{M_{m}}\right) ,
\end{equation*}

 we obtain

\begin{equation*}
\omega _{1,2}\left( f,\frac{1}{M_{n}},\frac{1}{M_{m}}\right) =\left( \omega
_{1,2}\left( f,\frac{1}{M_{n}},\frac{1}{M_{m}}\right) \right) ^{\frac{\alpha 
}{\alpha +\beta }}\left( \omega _{1,2}\left( f,\frac{1}{M_{n}},\frac{1}{M_{m}%
}\right) \right) ^{\frac{\beta }{\alpha +\beta }}  \label{1.12}
\end{equation*}

\begin{equation*}
\leq 2\left( \omega _{1}\left( f,\frac{1}{M_{n}}\right) \right) ^{\frac{%
\alpha }{\alpha +\beta }}\left( \omega _{2}\left( f,\frac{1}{M_{m}}\right)
\right) ^{\frac{\beta }{\alpha +\beta }}
\end{equation*}

\begin{equation*}
\leq 2\left( \omega \left( f,\frac{1}{M_{n}}\right) \right) ^{\frac{\alpha }{%
\alpha +\beta }}\left( \omega \left( f,\frac{1}{M_{m}}\right) \right) ^{%
\frac{\beta }{\alpha +\beta }}.
\end{equation*}

The validity of Corollary 2 follows immediately from Corollary 1.
\end{proof}

\begin{proof}[Proof of Theorem \protect\ref{T4}]
First, we set%
\begin{equation*}
f_{j}\left( x\right) =\rho _{j}\left( x\right) =\exp \frac{2\pi ix_{j}}{m_{j}%
}.
\end{equation*}

Then we define the function%
\begin{equation*}
f\left( x,y\right) =\sum\limits_{j=1}^{\infty }\frac{1}{M_{_{j}}^{\left(
\alpha +\beta \right) }}f_{j}\left( x\right) f_{j}\left( y\right) .
\end{equation*}

First, we prove that%
\begin{equation}
\omega \left( f,\frac{1}{M_{n}}\right) _{C}=O\left( \left( \frac{1}{M_{n}}%
\right) ^{\alpha +\beta }\right) .  \label{1.13}
\end{equation}

Since%
\begin{equation*}
\left\vert f_{j}\left( x-t\right) -f_{j}\left( x\right) \right\vert =0,  \text{
\ \ \ }j=0,1,...,n-1,\text{
\ \  \ }t\in I_{n}
\end{equation*}

we find%
\begin{equation*}
\left\vert f\left( x-t,y\right) -f\left( x,y\right) \right\vert \leq
\sum\limits_{j=1}^{n-1}\frac{1}{M_{_{j}}^{\left( \alpha +\beta \right) }}%
\left\vert f_{j}\left( x-t\right) -f_{j}\left( x\right) \right\vert
\end{equation*}

\begin{equation*}
+\sum\limits_{j=n}^{\infty }\frac{2}{M_{_{j}}^{\left( \alpha +\beta \right) }%
}\leq \frac{c}{M_{n}^{\left( \alpha +\beta \right) }}.
\end{equation*}

Hence%
\begin{equation}
\omega _{1}\left( f,\frac{1}{M_{n}}\right) =O\left( \left( \frac{1}{M_{n}}%
\right) ^{\alpha +\beta }\right) .  \label{1.14}
\end{equation}

Analogously, we have%
\begin{equation}
\omega _{2}\left( f,\frac{1}{M_{m}}\right) =O\left( \left( \frac{1}{M_{m}}%
\right) ^{\alpha +\beta }\right) .  \label{1.15}
\end{equation}

Now, by $\left( \ref{1.14}\right) $ and $\left( \ref{1.15}\right) $, we
obtain $\left( \ref{1.13}\right) $.

Next, we shall prove that $\sigma _{M_{n},M_{n}}^{-\alpha ,-\beta }\left(
f\right) $ diverge in the metric of $L^{1}.$ It is clear that

\begin{equation}
\left\Vert \sigma _{M_{n},M_{n}}^{-\alpha ,-\beta }\left( f\right)
-f\right\Vert _{1}\geq \left\vert \int\limits_{G_{m}^{2}}\left[ \sigma
_{M_{n},M_{n}}^{-\alpha ,-\beta }\left( f;x,y\right) -f\left( x,y\right) %
\right] \psi _{M_{k}}\left( x\right) \psi _{M_{k}}\left( y\right) d\mu
\left( x,y\right) \right\vert  \label{1.16}
\end{equation}

\begin{equation*}
\geq \left\vert \int\limits_{G_{m}^{2}}\sigma _{M_{n},M_{n}}^{-\alpha
,-\beta }\left( f;x,y\right) \psi _{M_{k}}\left( x\right) \psi
_{M_{k}}\left( y\right) dxdy\right\vert -\left\vert \widehat{f}\left(
M_{k},M_{k}\right) \right\vert
\end{equation*}

\begin{equation*}
=\left\vert \frac{1}{A_{M_{k}}^{-\alpha }A_{M_{k}}^{-\beta }}%
\sum\limits_{i=0}^{M_{l_{k}}}\sum\limits_{j=0}^{M_{l_{k}}}A_{M_{k}-i}^{-%
\alpha }A_{M_{k}-j}^{-\beta }\hat{f}\left( i,j\right)
\int\limits_{G_{m}^{2}}\psi _{i}\left( x\right) \psi _{j}\left( y\right)
\psi _{M_{k}}\left( x\right) \psi _{M_{k}}\left( y\right) d\mu \left(
x,y\right) \right\vert
\end{equation*}

\begin{equation*}
-\left\vert \widehat{f}\left( M_{k},M_{k}\right) \right\vert =\frac{1}{%
A_{M_{l_{k}}}^{-\alpha }A_{M_{l_{k}}}^{-\beta }}\left\vert \widehat{f}\left(
M_{k},M_{k}\right) \right\vert -\left\vert \widehat{f}\left(
M_{k},M_{k}\right) \right\vert .
\end{equation*}

We have%
\begin{equation*}
\widehat{f}\left( M_{k},M_{k}\right) =\int\limits_{G_{m}^{2}}f\left(
x,y\right) \psi _{M_{k}}\left( x\right) \psi _{M_{k}}\left( y\right) d\mu
\left( x,y\right)
\end{equation*}

\begin{equation*}
=\sum\limits_{j=1}^{\infty }\frac{1}{M_{j}^{\left( \alpha +\beta \right) }}%
\int\limits_{G_{m}^{2}}\rho _{j}\left( x\right) \rho _{j}\left( y\right)
\psi _{M_{k}}\left( x\right) \psi _{M_{k}}\left( y\right) d\mu \left(
x,y\right)
\end{equation*}

\begin{equation*}
=\sum\limits_{j=1}^{\infty }\frac{1}{M_{j}^{\left( \alpha +\beta \right) }}%
\int\limits_{G_{m}}\rho _{j}\left( x\right) \psi _{M_{k}}\left( x\right)
d\mu \left(
x\right)\int\limits_{G_{m}}\rho _{j}\left( y\right) \psi _{M_{k}}\left( y\right)
d\mu \left(
y\right)=\frac{1}{M_{k}^{\left( \alpha +\beta \right) }}.
\end{equation*}

So, we can write

\begin{equation}
\left\Vert \sigma _{M_{n},M_{n}}^{-\alpha ,-\beta }\left( f\right)
-f\right\Vert _{1}\geq c\left( \alpha ,\beta \right) .  \label{7}
\end{equation}

Theorem \ref{T4} is proved.
\end{proof}

\end{document}